\numberwithin{equation}{section}
\theoremstyle{definition}
\newtheorem{definition}{Definition}[section]
\newtheorem{example}[definition]{Example}
\theoremstyle{remark}
\newtheorem{remark}[definition]{Remark}
\theoremstyle{plain}
\newtheorem{proposition}[definition]{Proposition}
\newtheorem{theorem}[definition]{Theorem}
\newtheorem{lemma}[definition]{Lemma}
\newtheorem{result}[definition]{Result}
\newtheorem{corollary}[definition]{Corollary}
\definecolor{Red}{rgb}{1,0,0}
\definecolor{Magenta}{rgb}{0.69,0,0.83}
\definecolor{Green}{rgb}{0.117,0.706,0.314}
\definecolor{Grey}{rgb}{0.8,0.8,0.8}
\newcommand{\unitdisk}{\mathbb{D}}
\newcommand{\koba}{\mathsf{k}}
\newcommand{\dkoba}{\kappa}
\newcommand{\kobMetGrM}{\mathfrak{M}}
\newcommand{\lk}{(\lambda, \kappa)}
\newcommand{\iprt}{\mathsf{Im}}
\newcommand{\dtb}[1]{\delta_{#1}}
\newcommand{\clos}[1]{\overline{#1}}
\newcommand{\lraw}{\longrightarrow}
\newcommand*{\defeq}{\mathrel{\vcenter{\baselineskip0.5ex \lineskiplimit0pt \hbox{\scriptsize.}\hbox{\scriptsize.}}}=}
\newcommand{\unitin}{[0, \, 1]}
\newcommand{\bdy}{\partial}
\newcommand{\OM}{\Omega}
\newcommand{\smoo}{\mathcal{C}}
\newcommand{\bcdot}{\boldsymbol{\cdot}}
\newcommand{\C}{\mathbb{C}} 
\newcommand{\R}{\mathbb{R}}
\newcommand{\posint}{\mathbb{Z}_{+}}
\begin{document}

\title[Visibility with respect to the Kobayashi distance]{Notions of Visibility with respect to the \\Kobayashi distance: comparison and applications}

\author{Vikramjeet Singh Chandel, Anwoy Maitra, Amar Deep Sarkar}
\address{Department of Mathematics and Statistics, Indian Institute of Technology
Kanpur, Kanpur 208016, India}
\email{vschandel@iitk.ac.in, abelvikram@gmail.com}
\address{Department of Mathematics and Statistics, Indian Institute of Technology
Kanpur, Kanpur 208016, India}
\email{anwoym@iitk.ac.in, anwoymaitra@gmail.com}

\address{School of Basic Sciences, Indian Institute of Technology Bhubaneswar, Khordha 752050, India}
\email{amar@iitbbs.ac.in, amardeeepsarkar@gmail.com}

\keywords{Kobayashi distance, Kobayashi metric, embedded submanifolds, taut submanifolds, Kobayashi isometry, visibility, Wolff--Denjoy theorem}

\subjclass[2010]{Primary: 32F45; Secondary: 32H02, 32H40, 32H50, 53C23}

\begin{abstract}
In this article, we study 
notions of visibility 
with respect to 
the Kobayashi distance for relatively compact complex submanifolds in 
Euclidean spaces. 
We present a sufficient condition 
for a domain to possess the visibility property relative
to Kobayashi almost-geodesics introduced 
by Bharali--Zimmer (we call this simply {\em the visibility property}).
As an application, we produce new classes of domains having this kind of
visibility. Next, we introduce and study the notion of 
{\em visibility subspaces} of relatively compact complex
submanifolds.  Using this notion, we generalize to such submanifolds 
a recent result of Bracci--Nikolov--Thomas. The utility of this generalization 
is demonstrated by proving a theorem on the continuous extension of Kobayashi 
isometries. Finally, we prove a Wolff--Denjoy-type theorem 
that is a generalization of recent results of this kind by 
Bharali--Zimmer and Bharali--Maitra and that, owing to the new classes of domains
mentioned, is a proper generalization. Along the way, we note that 
what is needed for the proof of this sort of theorem to work is a 
form of visibility that seems to be intermediate between what we are calling
{\em visibility} and visibility with respect to ordinary Kobayashi 
geodesics.
\end{abstract}
\maketitle

\vspace{-1.0cm}
\section{Introduction and statement of main results}\label{S:intro}
For a hyperbolic complex manifold $M$, the Kobayashi distance 
$\koba_{M}$ and the Kobayashi--Royden pseudometric $\kappa_{M}$ 
encapsulate many complex-geometric and function-theoretic properties 
of $M$. Recall that $\koba_{M}$ is the integrated form of $\kappa_M$ 
(see Result~\ref{Res:C1GeodesicInf}).
This article 
explores a particular aspect of the Kobayashi distance, namely, notions of 
visibility with respect to real geodesics and almost-geodesics
with respect to the Kobayashi distance (see Definition~\ref{Def:lk_geodesic} in Section~\ref{S:prelims}). 
The two notions of visibility that we shall focus on
originated in the articles \cite{Zimmer}, \cite{Bharali_Zimmer}, 
\cite{Bharali_Maitra} and \cite{Bracci_Nikolov_Thomas},
where these properties were studied for bounded domains in complex 
Euclidean spaces. The motivation for the above two notions is to capture the 
negative-curvature-type behaviour of the 
Kobayashi distance; see the introductions in \cite{Bharali_Zimmer, 
Bharali_Maitra} for a detailed discussion regarding this.
In this article, we shall focus on the analogous properties on 
bounded, connected, embedded complex submanifolds of 
$\C^d$. Not only is this interesting in its own right, but we shall 
see that it leads to some useful applications.
\smallskip 

In what follows, $M$ will always denote a bounded, connected, embedded complex 
submanifold of $\C^d$. 
We shall use $\bdy M$ to denote $\overline{M}\setminus M$, the
boundary of $M$ calculated with respect to $\overline{M}$, 
which is a compact subset of $\C^d$. For $z \in M$, we shall use 
$\dtb{M}(z)$ to denote the Euclidean distance between $z$ and $\bdy M$.
We now introduce the aforementioned notions of visibility for $M$ 
with respect to the Kobayashi distance $\koba_M$.

\begin{definition} \label{def:BZ-vis_submani_reg}
	Let $M$ be as above. Fix $\lambda \geq 1, \kappa> 0$. We say that $M$ {\em has 
		the visibility property with respect to 
		$(\lambda,\kappa)$-almost-geodesics} or that $M$ {\em is a 
		$\lk$-visibility submanifold} if the following two properties 
	hold true.
	\begin{itemize}[leftmargin=14pt]
		\item Any two distinct points of $M$ can be joined by a $(\lambda, \kappa)$-almost-geodesic.
		\smallskip
		
		\item For every pair of
		points $p\neq q \in \bdy M$, there exist $\C^d$-neighbourhoods
		$V$ and $W$ of $p$ and $q$, respectively, and a compact subset $K$ of
		$M$ such that $\clos{V} \cap \clos{W} = \emptyset$ and such that 
		every $(\lambda,\kappa)$-almost-geodesic in $M$ with initial point in
		$V$ and terminal point in $W$ intersects $K$. 
	\end{itemize}
	If $M$ is a {\em domain} $\OM \subset \C^d$ that has the visibility
	property with respect to $(\lambda,\kappa)$-almost-geodesics, then we 
	say that $M$ {\em is a $\lk$-visibility domain}.
	If $M$, as above, has the visibility property with respect to 
	$\lk$-almost-geodesics for {\em every} 
	$\lambda\geq 1$ and $\kappa> 0$, then we shall say that $M$ 
	{\em is a visibility submanifold}. If $M$ has the visibility property with
	respect to $(1,\kappa)$-almost-geodesics for all $\kappa>0$, then we shall say
	that $M$ {\em is a weak visibility submanifold}. Finally,
	we shall say that $M$ {\em is a geodesic visibility submanifold} if $(M, 
	\koba_M)$ is a complete distance space and $M$ satisfies the second property 
	above in the definition of  $\lk$-visibility submanifolds with 
	``$\lk$-almost-geodesic'' replaced by ``real Kobayashi geodesic''.
\end{definition}

Given $M$ as above, a $\lambda\geq 1$ and a $\kappa>0$, it is not 
clear whether, given two distinct points in $M$, there is a 
$(\lambda, \kappa)$-almost-geodesic joining them. When $M=\OM$, a bounded domain 
in $\C^d$, this was proved by Bharali--Zimmer 
\cite[Proposition~4.4]{Bharali_Zimmer}. That this is the case for a general $M$, 
as above, is the content of Theorem~\ref{T:exist_1kappa} in 
Section~\ref{S:prelims}. In the case where $(M, \koba_M)$ is complete, any two 
points in $M$ can be joined by a real geodesic (see Remark~\ref{Rm:HRT}). 
Therefore, the definitions above are not vacuous.
We also mention that for bounded {\em domains} the concept of visibility 
has been studied in the articles \cite{Bharali_Zimmer}, \cite{Bharali_Maitra}.
The concept of geodesic visibility in the context of (bounded) domains $\OM$ for 
which $(\Omega, \koba_\Omega)$ is complete has been studied in the recent 
article \cite{Bracci_Nikolov_Thomas}.
\smallskip

We now turn our attention to visibility domains.
In \cite{Bharali_Zimmer}, it was shown that a large class of domains,
called {\em Goldilocks domains}, possesses the visibility 
property. Since we shall refer to them several times in 
this work, let us introduce them here. For this, we give the following 
definition. Given $M$ as above and given an
open set $U\subset\C^d$ such that $U\cap \bdy M\neq\emptyset$, we define
\begin{equation} {\label{E:kobMetGr}}
\forall\,r>0,\; \kobMetGrM_{M,\,U}(r) \defeq 
\sup\Bigg\{ \frac{1}{\dkoba_{M}(z;v)} 
\mid z\in U\cap M : \delta_M(z) \leqslant r \text{ and }  v\in T_z^{(1,0)}M : 
\|v\|=1 \Bigg\},
\end{equation} 
where $\|\bcdot\|$ denotes the Euclidean norm and $T^{(1,0)}_zM$ denotes the
complex tangent space to $M$ at $z$. We abbreviate $\kobMetGrM_{M,\,\C^d}$ to
$\kobMetGrM_{M}$. Note that, in particular, we can take $M$ to be a bounded
domain in $\C^d$. 
\begin{definition}\label{D:Goldilocks_domain}
	A bounded domain $\OM \subset \C^d$ is called a \emph{Goldilocks domain} if
	\begin{enumerate}
		\item for some (hence any) $\epsilon >0$ we have
		\[
		\int_0^\epsilon \frac{1}{r} \kobMetGrM_\Omega\left(r\right) dr < \infty, \text{ and}
		\]
		\item for each $z_0 \in \OM$, there exist constants $C, \alpha > 0$ (that depend on $z_0$) such that 
		\begin{equation*}\label{E:Goldilocks_koba-distance_bound}
			\koba_{\OM}(z_0, z) \leqslant C + \alpha \log\frac{1}{\delta_{\OM}(z)} \quad \forall z\in \OM.
		\end{equation*}
	\end{enumerate}
\end{definition}
\noindent Examples of Goldilocks domains are given in Section~2 of 
\cite{Bharali_Zimmer}.
In particular, due to a result of S.\,Cho \cite{Cho}, every bounded 
smooth pseudoconvex domain of finite D'Angelo type is a Goldilocks 
domain. 
Later, Bharali--Maitra in \cite{Bharali_Maitra} gave another 
criterion more permissive than
the one in Definition~\ref{D:Goldilocks_domain} for domains to possess the 
visibility property. Using this new criterion, they also constructed domains, 
which they called {\em caltrops}, that possess the visibility property but are not 
Goldilocks domains (more precisely, they do not satisfy condition~(2) 
in Definition~\ref{D:Goldilocks_domain}). Based on the {\bf new}
understanding that the proof that a domain possesses the visibility property
{\em can be localized to the boundary}, 
we present a sufficient condition more permissive
than the one given in \cite[Theorem~1.5]{Bharali_Maitra} for a domain to possess
the visibility property. (Some time after this paper was announced, results were
published (see \cite{Sarkar,Nik_Okt_Tho}) that have substantiated the assertion 
that visibility itself is a local
property of the boundary.) 
\begin{theorem}[Extended Visibility Lemma]\label{T:EVL}
	Let $\Omega$ be a bounded domain in $ \C^d $. Let $ E \subset \partial\Omega $ be a closed set such that
	for every $ p \neq q \in \partial \Omega $, there exist $ p'\in \partial \Omega $ and $ r > 0 $ satisfying
	\begin{itemize}[leftmargin=18pt]
		\item[$a)$] $ p \in B(p', r) $ and $ q \in \partial \Omega \setminus \overline{B(p', r)}$;
		\smallskip
		
		\item[$b)$] $ E \cap \partial B(p', r) = \emptyset $. 
	\end{itemize}
	Further, assume that for every $ q' \in \partial \OM \setminus E $ there exist a neighbourhood $ U $ of $ q' $, a point $z_0\in \OM$
	and a $ \smoo^1 $-smooth strictly increasing function $ f : (0, \infty) \longrightarrow \R $, with $f(t) \to \infty$ as $t\to\infty$,
	such that
	\begin{itemize}[leftmargin=18pt]
		\item[$1)$] for all $ z \in\OM\cap U$, $ \koba_\OM(z_0, z) \leq f(1/\delta_{\OM}(z)) $; 
		\item[$2)$] $ \kobMetGrM_{\OM,\,U}(r) \to 0 $ as $ r \to 0 $, and
		\item[$3)$] there exists $ r_0 > 0 $ such that
		\[
		\int_{0}^{r_0} \frac {\kobMetGrM_{\OM,\,U}(r)}{r^2} f^{\prime}\left(\frac{1}{r}\right)dr < \infty.
		\]
	\end{itemize} 
	Then $\OM$ is a visibility domain.
\end{theorem}
\noindent Here, $B(p',r)$ denotes the open Euclidean ball of radius 
$r$ centred at $p'$ in $\C^d$.
\smallskip

As an application of the above theorem, we prove the following corollary.
\begin{corollary}\label{C:finitetypeexceptsmallset}
	Let $\OM $ be a bounded domain in $ \C^d$. Suppose 
	there exists a compact subset $E \subset \partial\OM $ such that $E_a $, the 
	set of limit points of $E$, is a finite set and such that every $ p \in 
	\partial \OM \setminus E$ is a smooth pseudoconvex boundary point of finite 
	D'Angelo type. Then $\OM $ is a visibility domain.
\end{corollary}
	
\begin{remark}
	A recent result of Bracci--Nikolov--Thomas says that every 
	bounded convex domain with $\mathcal{C}^{\infty}$-boundary is a 
	geodesic visibility domain if all except finitely many boundary points 
	of the domain are of finite D'Angelo type; see 
	\cite[Theorem~1.1]{Bracci_Nikolov_Thomas}. 
	This result can 
	be deduced from 
	Corollary~\ref{C:finitetypeexceptsmallset}. To 
	see this, 
	observe that, by the above corollary, any such 
	domain is a visibility domain, hence a weak visibility domain. 
	Since bounded convex domains are complete, it 
	follows from Corollary~\ref{C:BNTandkappaposit} (see 
	subsection~\ref{SS:bnt_vis_submani}) that any such 
	domain is a geodesic visibility domain.
\end{remark}

\begin{remark}
	Some time after this paper was announced, Bharali--Zimmer 
	generalized
	the above theorem (see \cite[Theorem~1.4]{Bharali_Zimmer_unb}) by 
	removing the assumption of the boundedness of the domain and by allowing
	$E$ to be any closed, totally disconnected subset of the boundary.
\end{remark}
We turn our attention to geodesic visibility. We first remark that it is 
easy to come up with examples where the complete 
distance space $(M, \koba_M)$ does not have geodesic visibility 
although there are subspaces of $M$ that may have this property. 
This possibility motivated us to introduce the following definitions.
Before giving them, we clarify that, given a curve $\gamma$, 
we will 
use the symbol $\gamma$ itself to denote $\mathsf{ran}(\gamma)$. However, 
if there is any danger of confusion, we will 
use the unambiguous notation $\mathsf{ran}(\gamma)$.
We now give

\begin{definition}\label{Def:geodspace}
	Let $M$ be a bounded, connected, embedded complex submanifold of $\C^d$. 
	A subset $S$ of $M$ will be called 
	a {\em geodesic subspace} if the following two conditions are satisfied.
	\begin{itemize}[leftmargin=14pt]
		\item The distance space $(S,\koba_M|_{S \times S})$ is Cauchy-complete.
		\smallskip
		
		\item For any two distinct points in $S$, there 
		exists a geodesic of the 
		space $(M,\koba_M)$ that passes through those points and that 
		is contained in $S$.
	\end{itemize}
\end{definition}

\begin{definition}\label{Def:vissub}
A geodesic subspace $S$ is called a {\em visibility subspace} of $M$ 
if for any $p\neq q \in\bdy_a S:=\overline{S} \setminus S$, there 
exist $\C^d$-neighbourhoods $U$
and $V$ of $p$ and $q$, respectively, and a compact subset $K$ of $S$
such that $\overline{U} \cap \overline{V} = \emptyset$ and such
that, for every $\koba_M$-geodesic $\gamma$ in $S$ with initial point
in $U \cap S$ and terminal point in $V \cap S$, $\mathsf{ran}(\gamma)
\cap K \neq \emptyset$.
\end{definition}

\noindent 
In the case $S=M$, $S$ being a visibility subspace is 
equivalent to $S$ being a geodesic visibility submanifold.
Denoting the open unit disk in $\C$ by $\unitdisk$,
we ask the reader to note that 
$(\unitdisk^n,\koba_{\unitdisk^n}),\,n\geq 2$, is not a 
geodesic visibility domain (this is easy to see). It is known (see
\cite{Heath_Suffridge}) that all the one-dimensional retracts of
$\unitdisk^n$, $n \geq 2$, are given by 
$V_f \defeq \{(z,f(z)):z\in\unitdisk\}$, 
where $f=(f_1,\dots,f_{n-1}):\unitdisk\longrightarrow\unitdisk^{n-1}$ 
is a holomorphic map. It is not too difficult to show that $V_f$ is a
visibility subspace of $\unitdisk^n$ if each $f_j$ extends 
continuously to $\overline{\unitdisk}$. We sketch an argument 
showing this at the beginning of 
Section~\ref{S:Ext_Kob_Iso}. The converse is also true,
namely, if $V_f$ is a visibility subspace of $\unitdisk^n$, each 
$f_j$ extends continuously to $\overline{\unitdisk}$. One can see 
this by noting that $V_f$ is actually the image of a complex geodesic
in $\unitdisk^n$, namely of $z \mapsto (z,f(z)) : \unitdisk \lraw
\unitdisk^n$, and then using Theorem~\ref{th:cont_ext_koba_isom} 
below.
\smallskip

We now present a result regarding the visibility of geodesic subspaces. This result is a
generalization of Theorem~3.3 in \cite{Bracci_Nikolov_Thomas} to the context of geodesic subspaces. This 
theorem might seem overly abstract, but its utility will become apparent when we prove 
Theorem~\ref{th:cont_ext_koba_isom} and its corollaries.

\begin{theorem} \label{th:ext_of_id_vis}
	Let $M$ be a bounded, connected, embedded complex submanifold of $\C^d$. 
	Let $S\subset M$ be a geodesic subspace of $M$ such that $\big(S,\koba_M|_{S\times S}\big)$ is 
	Gromov hyperbolic.  
	Then $S$ is a visibility subspace of $M$ if and only if the identity map 
	$\mathsf{id}_S : S \longrightarrow S$ extends to a continuous surjective map
	$\widehat{\mathsf{id}}_S : \overline{S}^G \lraw\overline{S}$, where
	$\overline{S}^G$ denotes the Gromov compactification of $S$ with
	respect to $\koba_M|_{ S\times S}$. Moreover, this extended map is a
	homeomorphism if and only if $S$ has no geodesic loops in $\overline{S}$.
\end{theorem}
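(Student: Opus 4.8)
The plan is to model the argument on the classical correspondence, for a Gromov hyperbolic geodesic space, between continuous extensions of the identity to the Gromov boundary and the visibility property, but carried out relative to the ambient space $M$ and with the Euclidean boundary $\bdy_a S = \overline{S}\setminus S$ playing the role of the topological boundary. First I would set up the Gromov compactification $\overline{S}^G = S \sqcup \bdy_G S$ of the Gromov hyperbolic space $\big(S, \koba_M|_{S\times S}\big)$, recalling that points of $\bdy_G S$ are equivalence classes of sequences in $S$ that converge at infinity, and that a sequence $(z_n)$ in $S$ is "$\koba_M$-divergent" exactly when, in $\overline{S}\subset\C^d$, every subsequential limit lies in $\bdy_a S$ (this uses that $\koba_M|_{S\times S}$ is Cauchy-complete, so a $\koba_M$-bounded sequence stays in a compact subset of $S$). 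The map $\widehat{\mathsf{id}}_S$, if it exists, must send a Gromov boundary point $\xi = [(z_n)]$ to the unique Euclidean cluster point of $(z_n)$ in $\bdy_a S$; so the real content is (i) that this is well defined and continuous precisely when $S$ is a visibility subspace, and (ii) surjectivity.

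The core equivalence I would prove as follows. Assume $S$ is a visibility subspace. Given $\xi=[(z_n)]\in\bdy_G S$, fix a basepoint $o\in S$ and geodesic rays from $o$ representing $\xi$; I would show that for two sequences $(z_n), (w_n)$ representing the \emph{same} $\xi$, any Euclidean limit point $p$ of $(z_n)$ and $q$ of $(w_n)$ must coincide — if $p\neq q$, choose the neighbourhoods $U\ni p$, $V\ni q$ and compact $K\subset S$ from the visibility definition; the geodesics $\gamma_n$ in $S$ joining (tails of) $z_n$ and $w_n$ would then have to meet $K$, contradicting that $\gamma_n$ escapes every compact set (because $\koba_M(o,\gamma_n)\to\infty$, which follows from the Gromov-hyperbolic estimate that a geodesic between two points converging to the same boundary point drifts to infinity). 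Hence $\widehat{\mathsf{id}}_S(\xi)$ is well defined; a similar argument with $\xi\neq\eta$ in $\bdy_G S$ shows injectivity of $\widehat{\mathsf{id}}_S$ on the boundary fails only when there is a geodesic loop, which is where the final clause comes from. Continuity of $\widehat{\mathsf{id}}_S$ at a boundary point, and surjectivity onto $\bdy_a S$, would be obtained by a diagonal/normal-families argument: any $p\in\bdy_a S$ is the Euclidean limit of some $z_n\in S$; passing to a subsequence that converges in $\overline{S}^G$ (compactness) gives $\xi$ with $\widehat{\mathsf{id}}_S(\xi)=p$. Conversely, if such a continuous surjective extension exists, the visibility property is recovered by contradiction: a failure of visibility at $p\neq q\in\bdy_a S$ produces geodesics $\gamma_n$ from near $p$ to near $q$ staying in compacta $K$, hence (after extraction) a limiting geodesic line in $S$; its two ends give Gromov boundary points mapping to $p$ and $q$, but the visibility-style divergence estimate in a Gromov hyperbolic space forces the endpoints of $\gamma_n$ to separate at infinity, contradicting that they both converge (in $\overline{S}$) while $\gamma_n$ stays in $K$.

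For the "moreover" part, I would argue that $\widehat{\mathsf{id}}_S$ is automatically a continuous bijection $S\to S$ on the interior, so it is a homeomorphism iff it is injective on $\bdy_G S$ and the inverse is continuous; since $\overline{S}^G$ is compact and $\overline{S}$ is Hausdorff, a continuous bijection is automatically a homeomorphism, so the only issue is injectivity on $\bdy_G S$. Two distinct Gromov boundary points $\xi\neq\eta$ map to the same $p\in\bdy_a S$ exactly when there are representing sequences both clustering at $p$ but not equivalent in $\overline{S}^G$; joining them by $\koba_M$-geodesics in $S$ and taking a limit produces a \emph{geodesic line} in $S$ both of whose ends converge, in the Euclidean topology, to the single point $p\in\overline{S}$ — this is precisely a geodesic loop in $\overline{S}$ (a bi-infinite $\koba_M$-geodesic in $S$ whose two ends land at the same boundary point). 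So non-injectivity on the boundary is equivalent to the existence of such a loop, giving the stated criterion; one must also check the converse, that a geodesic loop genuinely yields two distinct Gromov boundary points, which follows because the two ends of a geodesic line always determine distinct points of $\bdy_G S$ in a Gromov hyperbolic space.

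The main obstacle I expect is the compactness/limiting step used in both directions: extracting, from a family of $\koba_M$-geodesic segments $\gamma_n\subset S\subset\C^d$ with escaping endpoints (or with endpoints clustering in $\C^d$), a limiting geodesic of $(M,\koba_M)$ that actually lies in $S$ and is genuinely bi-infinite. This requires a locally uniform control — an Arzelà–Ascoli argument using that $\koba_M$ is locally comparable to the Euclidean metric on $M$, that $\koba_M$-geodesics are $1$-Lipschitz from intervals, and that Cauchy-completeness of $\big(S,\koba_M|_{S\times S}\big)$ keeps the relevant portions of $\gamma_n$ inside compact subsets of $S$ — together with Gromov-hyperbolicity estimates (stability of geodesics, the $\delta$-thin triangle inequality) to guarantee the limit does not collapse and to quantify the divergence of geodesics between points tending to a common boundary point. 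Getting these estimates to interact cleanly with the definition of $\bdy_a S$ and with the fact that $\koba_M$ (not $\koba_S$) is the metric under consideration is the delicate technical heart of the argument.
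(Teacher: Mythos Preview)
Your forward direction (visibility $\Rightarrow$ continuous surjective extension) and the ``moreover'' clause are essentially the paper's approach, modulo your use of the sequential model of $\bdy^G S$ where the paper uses geodesic rays: the paper defines $\widehat{\mathsf{id}}_S$ via landing points of rays, proves well-definedness from the lemma that sequences converging to distinct points of $\bdy_a S$ have Kobayashi distance diverging to infinity, and derives the homeomorphism criterion by constructing a geodesic loop from two rays representing distinct Gromov boundary points with the same Euclidean landing point, using thin-triangle estimates.

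There is, however, a genuine gap in your converse direction (extension $\Rightarrow$ visibility). You write that failure of visibility at $p\neq q$ ``produces geodesics $\gamma_n$ from near $p$ to near $q$ staying in compacta $K$'' and then extract a limiting geodesic line in $S$. This mis-negates the definition: failure of visibility means there exist $z_n\to p$, $w_n\to q$ and geodesics $\gamma_n$ joining them that eventually \emph{avoid} every given compact subset of $S$; hence there is no limiting geodesic in $S$ to extract, and your contradiction mechanism collapses. A repair in your spirit is possible: since $\gamma_n$ escapes compacta, $\koba_M(o,\gamma_n)\to\infty$, so in a $\delta$-hyperbolic space $(z_n|w_n)_o\to\infty$, whence $(z_n)$ and $(w_n)$ converge to the \emph{same} point $\xi\in\bdy^G S$, and continuity of $\widehat{\mathsf{id}}_S$ then gives $p=\widehat{\mathsf{id}}_S(\xi)=q$, a contradiction. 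The paper takes a different and more direct route: it pulls back $L_p:=\widehat{\mathsf{id}}_S^{-1}\{p\}$ and $L_q:=\widehat{\mathsf{id}}_S^{-1}\{q\}$ to disjoint compact subsets of $\bdy^G S$, invokes the intrinsic visibility of $\overline{S}^G$ (a standard fact for proper Gromov hyperbolic spaces) to obtain neighbourhoods $\widetilde{U}_p,\widetilde{U}_q$ and a compact $K\subset S$, and observes that any sequence in $S$ converging Euclideanly to $p$ (resp.\ $q$) eventually enters $\widetilde{U}_p$ (resp.\ $\widetilde{U}_q$).
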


\noindent We refer the reader to \cite[Part~III, Chapter~3]{Bridson_Haefliger} 
for the definition of the Gromov compactification of a  
proper, geodesic distance
space that is Gromov hyperbolic. (Also see 
\cite[Section~3]{Bracci_Nikolov_Thomas} for a quick introduction to 
the same when the distance is the Kobayashi distance.) We also refer 
the reader to Definition~\ref{dfn:geod_loop} in Section~\ref{S:Ext_Kob_Iso} for
the definition of a geodesic loop.
\smallskip

Using the above theorem, we prove a
result concerning the continuous extension of Kobayashi isometries.

\begin{theorem}\label{th:cont_ext_koba_isom}
	Suppose that $M \subset \C^m$ and $N \subset \C^n$ are bounded,
	connected, embedded complex submanifolds and that $(M,\koba_M)$ is 
	a complete 
	Gromov hyperbolic space.
	Let $f:M \lraw N$ be an isometric embedding with respect to the
	Kobayashi distances and suppose that $S \defeq f(M)$ (which is 
	easily seen to be a
	geodesic subspace of $N$) is a visibility subspace of $N$. Then $f$
	extends to a continuous map $\widehat{f} : \overline{M}^G \lraw 
	\overline{N}$, where $\overline{M}^G$ denotes the Gromov 
	compactification of $(M,\koba_M)$.
	Further, if $S$ has no geodesic loops in $\overline{S}$, then 
	$\widehat{f}$ is a homeomorphism from $\overline{M}^G$ to 
	$\overline{S}$. 
\end{theorem}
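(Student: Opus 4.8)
The plan is to deduce Theorem~\ref{th:cont_ext_koba_isom} from Theorem~\ref{th:ext_of_id_vis} by transporting the Gromov-theoretic picture from $M$ to $S=f(M)\subset N$ via the isometry $f$. The first step is to observe that since $f:(M,\koba_M)\to (S,\koba_N|_{S\times S})$ is a surjective isometry and $(M,\koba_M)$ is complete and Gromov hyperbolic, the space $(S,\koba_N|_{S\times S})$ is also complete and Gromov hyperbolic, and $f$ induces a homeomorphism $\partial f$ between the Gromov boundaries, hence a homeomorphism $\overline{f}:\overline{M}^G\to\overline{S}^G$ extending $f$ (this is standard: an isometry between Gromov hyperbolic spaces extends canonically to the Gromov compactifications). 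Simultaneously one records the fact, asserted in the statement and presumably established earlier or easily, that $S=f(M)$ is a geodesic subspace of $N$: completeness of $(M,\koba_M)$ gives geodesics between any two points of $M$, and $f$ being an isometry carries these to $\koba_N$-geodesics of $N$ lying in $S$ and joining any two prescribed points of $S$.

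The second step is to apply Theorem~\ref{th:ext_of_id_vis} to the pair $S\subset N$. Since $S$ is assumed to be a visibility subspace of $N$ and $(S,\koba_N|_{S\times S})$ is Gromov hyperbolic (being isometric to the Gromov hyperbolic space $M$), the theorem yields a continuous surjection $\widehat{\mathsf{id}}_S:\overline{S}^G\to\overline{S}$ extending $\mathsf{id}_S$, which is a homeomorphism precisely when $S$ has no geodesic loops in $\overline{S}$. The third step is simply to compose: define $\widehat f \defeq \widehat{\mathsf{id}}_S\circ\overline f : \overline{M}^G\to\overline{S}\subset\overline{N}$. On the dense subset $M\subset\overline{M}^G$ this agrees with $f$ because $\overline f|_M = f$ (as a map into $S$) and $\widehat{\mathsf{id}}_S|_S=\mathsf{id}_S$. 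Being a composition of continuous maps on a compact space, $\widehat f$ is continuous, and it is the unique continuous extension of $f$ since $M$ is dense in $\overline{M}^G$ and $\overline{N}$ is Hausdorff. For the last assertion: if $S$ has no geodesic loops in $\overline{S}$, then $\widehat{\mathsf{id}}_S$ is a homeomorphism onto $\overline{S}$, and $\overline f$ is already a homeomorphism onto $\overline{S}^G$, so $\widehat f$ is a homeomorphism from $\overline{M}^G$ onto $\overline{S}$.

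I expect the genuinely substantive point — and the one requiring the most care — to be the first step, namely the clean statement and proof that a surjective isometry between complete Gromov hyperbolic spaces extends to a homeomorphism of Gromov compactifications restricting to the given homeomorphism of Gromov boundaries; one must check that the Gromov product, and hence the boundary topology, is intrinsic enough to be preserved, and that completeness (or at least properness/geodesic-completeness as available here) is what is needed to identify $\overline{M}^G$ correctly. The compatibility of the two ``boundary'' structures on $S$ — the Gromov boundary $\partial^G S$ of $(S,\koba_N|_{S\times S})$ versus the Euclidean (absolute) boundary $\bdy_a S=\overline S\setminus S$ — is exactly what Theorem~\ref{th:ext_of_id_vis} handles, so once the isometric transfer in the first step is in place the rest is formal. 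I would also take care to note that $\overline f$ maps $M$ onto $S$ (not merely into $\overline{S}^G$), so that the composition genuinely lands in $\overline S$ and restricts to $f$ on $M$; this is where surjectivity of $f$ onto $S$ is used.
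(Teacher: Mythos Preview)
Your proposal is correct and essentially identical to the paper's proof: the paper also observes that $f$ is an isometry onto $(S,\koba_N|_{S\times S})$, concludes that this space is Gromov hyperbolic, invokes the general fact that $f$ extends to a homeomorphism $\widetilde{f}:\overline{M}^G\to\overline{S}^G$, applies Theorem~\ref{th:ext_of_id_vis} to obtain $\widehat{\mathsf{id}}_S:\overline{S}^G\to\overline{S}$, and defines $\widehat{f}$ as the composite (followed by the inclusion $\overline{S}\hookrightarrow\overline{N}$). The only difference is cosmetic: the paper simply cites ``the general theory of Gromov hyperbolic spaces'' for the extension of the isometry, whereas you flag this as the point deserving care.
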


To illustrate the use of this theorem,
we provide the following corollary, which partly
generalizes \cite[Theorem~1.3]{Maitra}. But first
a few words about two concepts that occur in the statement of the 
corollary below. The first one is that of the 
$\smoo^{1,{\rm Dini}}$-smoothness of
the boundary of a domain in $\C^d$. Since we are not going to make 
explicit use of $\smoo^{1,{\rm Dini}}$-smoothness, we shall not define
it here. We direct the reader to \cite{Nikolov_Lyubomir} for the definition.
We note only that if a domain has
$\smoo^{1,\alpha}$-smooth boundary, where $\alpha>0$ is arbitrary, 
then it automatically has $\smoo^{1,{\rm Dini}}$-smooth boundary. In 
particular, all domains with $\smoo^k$-smooth boundary, where $k \geq 
2$, have $\smoo^{1,{\rm Dini}}$-smooth boundary. The second concept 
is that of 
$\C$-strict convexity, which we shall define in detail. Before
giving the definition, we recall that, given a domain $\OM \subset 
\C^d$ and a
$\smoo^1$-smooth boundary point $p$ of $\OM$, we can consider the 
(real) tangent space $T_p(\bdy\OM)$, where we view it 
{\em extrinsically} (i.e., as a real hyperplane
in $\C^d$), and we can also consider {\em the complex tangent space 
to $\bdy\OM$ at} $p$, given by $H_p(\bdy\OM) \defeq T_p(\bdy\OM) \cap 
i\big(T_p(\bdy\OM)\big)$.

\begin{definition} \label{Def:C-str-cvx_smoo}
Given a convex domain $\OM \subset \C^d$ with $\smoo^1$-smooth boundary, a 
boundary point $p$ of $\OM$ is said to be {\em a $\C$-strictly convex boundary
point} if $\big(p + H_p(\bdy\OM)\big) \cap \overline{\OM} = \{p\}$, where 
$H_p(\bdy\OM)$ is the complex tangent space to $\bdy\OM$ at $p$. 
\end{definition}

\begin{corollary}\label{Cor:ext_cg_dinismooth_strcvx}
Let $\OM$ be a bounded convex domain in $\C^d$. Suppose that there
exists a compact subset $S \subset \bdy \OM$ such that $S_a$, the set
of limit points of $S$, is a finite set, and such that every 
$p \in \bdy \OM \setminus S$ is a $\smoo^{1,{\rm Dini}}$-smooth
$\C$-strictly convex boundary point. Let $f: \unitdisk \lraw \OM$ be a
complex geodesic. Then $f$ extends continuously to 
$\overline{\unitdisk}$.
\end{corollary}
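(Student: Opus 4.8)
The plan is to deduce the corollary from Theorem~\ref{th:cont_ext_koba_isom}, applied with $M=\unitdisk$, $N=\OM$ and $f$ the given complex geodesic. First I would assemble the soft inputs. A complex geodesic $f:\unitdisk\lraw\OM$ is an isometry for the Kobayashi distances; $(\unitdisk,\koba_{\unitdisk})$ is a complete Gromov hyperbolic space; and, since $\OM$ is a bounded convex domain, $(\OM,\koba_{\OM})$ is complete. Hence $f$ is proper, $S\defeq f(\unitdisk)$ is closed in $\OM$, $f$ is a bijective isometry of $(\unitdisk,\koba_{\unitdisk})$ onto $(S,\koba_{\OM}|_{S\times S})$, and $\bdy_a S=\overline S\setminus S\subset\bdy\OM$; moreover $\bdy_a S=\bigcap_{0<r<1}\overline{f(\{r<|w|<1\})}$, so $\bdy_a S$ is a connected compact subset of $\bdy\OM$. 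Granting that $S$ is a visibility subspace of $\OM$, Theorem~\ref{th:cont_ext_koba_isom} (in whose statement it is also recorded that $S$ is a geodesic subspace of $\OM$) yields a continuous extension $\widehat f:\overline{\unitdisk}^G\lraw\overline{\OM}$; since $(\unitdisk,\koba_{\unitdisk})$ is, up to a constant factor, the real hyperbolic plane, $\overline{\unitdisk}^G$ is homeomorphic to $\overline{\unitdisk}$ via the extension of $\mathsf{id}_{\unitdisk}$, and $\widehat f$ becomes the desired continuous extension of $f$ to $\overline{\unitdisk}$. So the whole corollary reduces to showing that $S$ is a visibility subspace of $\OM$. (To avoid a clash of notation I write $E$ for the compact set called $S$ in the statement, so $E_a$ is finite and every point of $\bdy\OM\setminus E$ is a $\smoo^{1,{\rm Dini}}$-smooth $\C$-strictly convex boundary point.)

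The one substantial geometric ingredient I would use is a local visibility lemma: \emph{if $p\neq q$ are two $\smoo^{1,{\rm Dini}}$-smooth $\C$-strictly convex boundary points of $\OM$, then there are $\C^d$-neighbourhoods $U\ni p$, $V\ni q$ with $\overline U\cap\overline V=\emptyset$ and a compact $K_0\subset\OM$ such that every $\koba_{\OM}$-geodesic with initial point in $U\cap\OM$ and terminal point in $V\cap\OM$ meets $K_0$.} I would derive this from the sufficient condition for visibility established in this paper, localized to neighbourhoods of $p$ and $q$ (see also \cite{Bharali_Zimmer,Bracci_Nikolov_Thomas}); the point of $\C$-strict convexity is that, at a $\smoo^1$-smooth boundary point $p$, convexity forces any analytic disc in $\bdy\OM$ through $p$ to lie in the supporting hyperplane $p+T_p(\bdy\OM)$, hence in the complex hyperplane $p+H_p(\bdy\OM)$, so $\C$-strict convexity excludes nonconstant such discs, while $\smoo^{1,{\rm Dini}}$-smoothness supplies the quantitative scaling estimates near $p$ and $q$. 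I expect this localization to be the main obstacle in the argument.

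Granting this lemma, I would show next that the cluster set $C(f,\eta)$ of $f$ at each $\eta\in\bdy\unitdisk$ is a single point. By properness $C(f,\eta)$ is a connected compact subset of $\bdy\OM$. Suppose $C(f,\eta)$ meets the open set $\bdy\OM\setminus E$ at a point $p^\ast$ and is not a singleton; then by connectedness it contains a point $q^\ast\neq p^\ast$ lying so close to $p^\ast$ that $q^\ast\in\bdy\OM\setminus E$ as well. Choosing sequences $z_n,w_n\to\eta$ in $\unitdisk$ with $f(z_n)\to p^\ast$ and $f(w_n)\to q^\ast$, the $f$-images of the hyperbolic geodesic segments $[z_n,w_n]$ are $\koba_{\OM}$-geodesic segments joining points near $p^\ast$ to points near $q^\ast$; and since both endpoints of $[z_n,w_n]$ tend to $\eta$, these segments escape every compact subset of $\unitdisk$, so their $f$-images escape every compact subset of $\OM$, contradicting the local visibility lemma applied to $p^\ast,q^\ast$. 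Hence $C(f,\eta)$ meets $\bdy\OM\setminus E$ only when it is a singleton. Otherwise $C(f,\eta)\subset E$; but $E$ has only finitely many accumulation points, so it contains no connected set with more than one point (each point of $E$ is either isolated in $E$, hence isolated in every subset of $E$, or lies in the finite set $E_a$), and again $C(f,\eta)$ is a singleton.

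Finally I would verify directly that $S$ is a visibility subspace of $\OM$. Fix $p\neq q\in\bdy_a S$ and assume the condition of Definition~\ref{Def:vissub} fails for this pair. Applying the failure with the shrinking neighbourhoods $U_n=B(p,1/n)$, $V_n=B(q,1/n)$ (with disjoint closures for $n$ large) and the compacta $K_n\defeq\{x\in S:\koba_{\OM}(f(0),x)\le n\}$ --- which are compact because $(\OM,\koba_{\OM})$ is complete, hence proper, and $S$ is closed in $\OM$ --- produces, for each large $n$, a $\koba_{\OM}$-geodesic $\gamma_n$ in $S$ avoiding $K_n$, necessarily of the form $\gamma_n=f\circ[a_n,b_n]$ for a hyperbolic geodesic segment $[a_n,b_n]\subset\unitdisk$, with $f(a_n)\to p$ and $f(b_n)\to q$; avoiding $K_n$ says that the $\koba_{\unitdisk}$-distance from $0$ to $[a_n,b_n]$ tends to $\infty$. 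By properness $|a_n|,|b_n|\to 1$, and since the segments $[a_n,b_n]$ escape every compact subset of $\unitdisk$ the pairs $(a_n,b_n)$ have a subsequential limit $(\eta,\eta)$ with $\eta\in\bdy\unitdisk$; along that subsequence $p,q\in C(f,\eta)$, which is a singleton by the previous paragraph, forcing $p=q$ --- a contradiction. Hence $S$ is a visibility subspace of $\OM$, and Theorem~\ref{th:cont_ext_koba_isom} completes the proof. The remaining facts used --- that $f$ carries $\koba_{\unitdisk}$-geodesics to $\koba_{\OM}$-geodesics contained in $S$, that $S$ is closed in $\OM$, and the homeomorphism $\overline{\unitdisk}^G\cong\overline{\unitdisk}$ --- are routine.
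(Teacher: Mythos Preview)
Your argument is correct, but it takes a substantially longer path than the one the paper intends. Although the paper does not write out a proof of this corollary, the route is clear from the surrounding machinery: the exceptional set $E$ (your renaming of $S$) satisfies the separation hypothesis of Corollary~\ref{cor:C-str-cvx_Dini_imp_visib}, verified exactly as in the proof of Corollary~\ref{C:finitetypeexceptsmallset} using the finiteness of $E_a$; every point of $\bdy\OM\setminus E$ is $\smoo^{1,{\rm Dini}}$-smooth and (since $\OM$ is convex) locally $\C$-strictly convex. Hence Corollary~\ref{cor:C-str-cvx_Dini_imp_visib} applies and $\OM$ is a $(1,\kappa)$-visibility domain for all $\kappa>0$, and therefore (being complete) a BNT-visibility domain. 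One then simply invokes Corollary~\ref{C:ComplexGeodesicExtension}. In particular, once $\OM$ itself is a visibility domain, $f(\unitdisk)$ is automatically a visibility subspace and no further analysis of $f$ is needed.

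Your approach differs in that you only use visibility for pairs of \emph{good} boundary points (your ``local visibility lemma''), and then cover the exceptional set $E$ by a separate cluster-set argument exploiting that $E$ is totally disconnected (any connected subset with more than one point would have to lie in the finite set $E_a$). This is all valid, and your reduction in the last paragraph---pulling back to $\unitdisk$ via the isometry $f$ and using that hyperbolic geodesic segments whose endpoints converge to the same boundary point must escape compacta---is clean. However, the ``local visibility lemma'' you need is itself a special case of the global visibility the paper establishes via the Gromov-product criterion in Proposition~\ref{P:charac_visib_Grom_prod_gen}, so the cluster-set detour is extra work here. Your route could be advantageous in a setting where global visibility of the ambient domain genuinely fails but one still controls pairs of regular boundary points and the exceptional set is totally disconnected; for the present corollary the paper's direct route is shorter.
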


\noindent The proof of the above corollary is given at the end of
Section~\ref{S:Ext_Kob_Iso}.
\smallskip

Finally, we move to a topic in the theory of iterations of a 
holomorphic self-map, where the visibility property turns out to imply
some interesting consequences. 
In this direction, we begin with the following famous result due,
independently, to Denjoy and Wolff \cite{Denjoy, Wolff}.
\begin{result}[Denjoy, Wolff]
Let $f : \unitdisk\lraw\unitdisk$ be a holomorphic map. Either $f$ has a unique fixed
point in $\unitdisk$ or there exists a point $p\in \bdy\unitdisk$ such
that $f^{ n}(z)\to p$ as $n\to \infty$ for each $z\in \unitdisk$.
In the latter case, convergence is uniform on compact subsets of $\unitdisk$.
\end{result}
In several works, see e.g. \cite{Bharali_Zimmer, Bharali_Maitra, Zimmer}, it is noted that similar phenomena 
regarding the iteration of a holomorphic self-map of a bounded 
domain\,---\,as exhibited above in the case of 
$\unitdisk$\,---\,could be understood and explained by appealing to 
the visibility property of the Kobayashi distance. 
Using this connection, Bharali--Maitra proved two Wolff--Denjoy-type 
theorems \cite[Theorems~1.8 and 1.9]{Bharali_Maitra} 
for taut domains possessing the visibility property. Our 
next result improves upon \cite[Theorem~1.8]{Bharali_Maitra} 
in two ways: our result is a Wolff--Denjoy-type 
theorem for bounded, taut {\em submanifolds} of $\C^d$ on 
which only {\bf weak} visibility is assumed.
We now present the result.
\begin{theorem} \label{th:WD_submani}
	Suppose that $M$ is a bounded, connected, embedded complex submanifold of 
	$\C^d$ that satisfies the weak visibility property and is taut. Let 
	$F:M\lraw M$ be a holomorphic map. Then exactly one of the following holds:
	\begin{enumerate}
		\item For every $z \in M$, $\{F^\nu(z) : \nu \in \posint\}$ is a
		relatively compact subset of $M$;
		\smallskip
				
		\item there exists $\xi \in \bdy M$ such that, for every $z \in M$,
		$\lim_{\nu \to \infty} F^\nu(z) = \xi$, this convergence being
		uniform on the compact subsets of $M$. 
	\end{enumerate}
\end{theorem}
\begin{remark}\label{Rm:novelty_proof}
The method of our proof 
follows very closely that 
of \cite[Theorem~1.8]{Bharali_Maitra}. A crucial tool that was employed in 
the latter proof
(see \cite[Theorem~4.3]{Bharali_Maitra}) was a 
consequence of the fact that $\lim_{r\to 0}\kobMetGrM_{\OM}(r)=0$
for any taut visibility domain $\OM$. We prove an 
analogous result (Theorem~\ref{th:visib_taut_KobMetGr_0} in
Section~\ref{S:WD}) for an $M$ as above that is taut and that is
only assumed to have the {\em weak} visibility property.
We
emphasize that our theorem is particularly useful where the given submanifold 
is not known to possess the visibility property, but {\em is} known to possess the 
{\em weak} visibility property;
see e.g. Example~\ref{SS:BNTbutnotgoldilocks} in 
Section~\ref{S:twoexam}.
\end{remark}
We now present the plan of this paper.
In Section~\ref{S:prelims}, we present preliminary
material relating to the Kobayashi distance and metric on relatively
compact complex submanifolds of $\C^d$
and prove that
almost-geodesics joining arbitrary pairs of points exist on any such
manifold. In 
Section~\ref{S:local_cons_EVL}, we present the proof of
Theorem~\ref{T:EVL} and then compare the various notions of 
visibility that appear in this paper. In Section~\ref{S:twoexam}, we
present two examples of domains that are not Goldilocks domains but
nevertheless possess some form of visibility. In 
Section~\ref{S:Ext_Kob_Iso}, we study basic properties of
geodesic and visibility subspaces and then prove Theorems~\ref{th:ext_of_id_vis} 
and \ref{th:cont_ext_koba_isom}. Finally, in
Section~\ref{S:WD}, we prove Theorem~\ref{th:WD_submani} after proving
some preliminary results, which are interesting in their own
right.  
  
\section{Preliminaries} \label{S:prelims}
In this section, we shall show the existence of $\lk$-almost-geodesics
(with respect to the Kobayashi distance) 
for a  bounded, connected, embedded complex submanifold $M$ of $ \C^d$.
Before we begin, we recall that the 
Kobayashi pseudometric $\dkoba_M$ is upper semicontinuous. Therefore,
for any continuous mapping $\gamma : I \lraw T^{(1,0)}M$, 
where $I \subset \R$ 
is an interval and $T^{(1,0)}M$ denotes the complex tangent bundle of $M$,
\[
\int_I \dkoba_M(\gamma(t)) dt 
\]
makes sense (it may be $\infty$). Therefore, if we have an embedded
complex submanifold $M$ of $\C^d$ and we have a piecewise $\smoo^1$ 
curve $\gamma : [a,b] \lraw M$, where $a,b \in \R$, $a \neq b$, then
\begin{equation} \label{eq:GenKobaIntXpr}
\int_{a}^{b} \dkoba_M(\gamma(t),\gamma'(t)) dt<\infty.
\end{equation}
It also follows easily that if $\gamma : 
[a,b] \lraw M$ is an absolutely continuous curve, then the
Lebesgue integral of the function $t \mapsto 
\dkoba_M(\gamma(t),\gamma'(t)) : [a,b] \lraw [0,\infty)$ is defined (a 
priori, it could be $\infty$). Therefore, given $\gamma : [a,b] \lraw
 M$, an absolutely continuous curve, we let $l_M(\gamma)$ denote the integral
\eqref{eq:GenKobaIntXpr}, {\em the length of $\gamma$ calculated using
the Kobayashi pseudometric}. In what follows, the following result is relevant.
\begin{result}\label{Res:C1GeodesicInf}
Let $M$ be a connected, embedded complex submanifold  of $\C^d$.\\ 
\noindent{{\normalfont (1)}} \cite[Theorem~1]{Royden} For any $z, w \in M$, we have
\[
\koba_M(z, w) = \inf \left\{ l_M(\gamma) \mid \gamma: [a, b] 
\longrightarrow M \,\, \text{ is piecewise $\mathcal{C}^1$, with 
$\gamma(a) = z$ and $\gamma(b) = w$}\right\}.
\]
We can also take $\gamma$ to be $\mathcal{C}^1$ above.
\smallskip  

\noindent{{\normalfont (2)}} \cite[Theorem~3.1]{VenturiniManifold} For 
any $z, w \in M$, we have
\begin{align*}  
\koba_M(z,w) = &\inf \big\{ l_M(\gamma) \mid \gamma: [a, b] \lraw M \,\, \text{is absolutely continuous, with } \gamma(a) = z \text{ and}\\
&\gamma(b) = w \big\}.
\end{align*} 
\end{result}

We now present a result that is at the heart of the main result of this section, namely the existence of $\lk$-almost-geodesics. 
\begin{proposition}\label{P:PreliForExistenceLamdaKappa}
Let $ M $ be a bounded, connected, embedded complex submanifold of 
$\C^d$. Then the following hold.
\begin{enumerate}
	\item There exists $ c > 0 $ such that 
	\[
	c\, \Vert X \Vert \leq \kappa_M(z, X)
	\]
	for all $ z \in M $ and $ X \in T^{(1, 0)}_zM $.
	\smallskip
	
	\item For any compact set $ K \subset M $, there exists a constant $ C_1 = C_1(K) > 0 $ so that
	\[
          \kappa_M(z, X) \leq 	C_1\, \Vert X \Vert
	\]
	for all $ z \in K $ and $ X \in T^{(1, 0)}_zM $.
\end{enumerate}
\end{proposition}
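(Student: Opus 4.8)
The plan is to deduce part~(1) from the boundedness of $M$ together with the distance-decreasing property of the Kobayashi--Royden pseudometric, and to deduce part~(2) from the upper semicontinuity of $\kappa_M$ on $TM$ together with a compactness argument.

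\smallskip\noindent\textit{Part (1).} Since $M$ is bounded I may fix $R>0$ such that $M\subset B(w,R)$ for every $w\in M$, where $B(w,R)\subset\C^d$ denotes the Euclidean ball of radius $R$ centred at $w$; it suffices to take $R$ exceeding the Euclidean diameter of $M$. For each $w\in M$ the inclusion $\iota:M\hookrightarrow B(w,R)$ is holomorphic, and its differential at any $z\in M$ is the inclusion $T^{(1,0)}_zM\hookrightarrow\C^d=T^{(1,0)}_zB(w,R)$. Hence monotonicity of $\kappa$ under holomorphic maps gives $\kappa_{B(w,R)}(w,X)\le\kappa_M(w,X)$ for all $w\in M$ and $X\in T^{(1,0)}_wM$, with $X$ on the left regarded as an element of $\C^d$. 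Since the Kobayashi--Royden metric of a ball, evaluated at its centre, is $\kappa_{B(w,R)}(w,X)=\|X\|/R$, this yields $\kappa_M(w,X)\ge\|X\|/R$, so $c:=1/R$ works.

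\smallskip\noindent\textit{Part (2).} I would consider the sphere bundle $\Sigma_K:=\{(z,X):z\in K,\ X\in T^{(1,0)}_zM,\ \|X\|=1\}$. Because $M$ is embedded, the assignment $z\mapsto T^{(1,0)}_zM$ (as a point of the Grassmannian of $\C^d$) is continuous, so $\Sigma_K$ is a closed subset of the compact set $K\times\{X\in\C^d:\|X\|=1\}$ and is therefore compact. On $\Sigma_K$ the function $(z,X)\mapsto\kappa_M(z,X)$ is finite --- given $z\in M$ and $X\in T^{(1,0)}_zM$, a small analytic disc through $z$ in the direction $X$, extracted from a local holomorphic chart, shows $\kappa_M(z,X)<\infty$ --- and it is upper semicontinuous on $TM$, as recalled at the start of this section. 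An upper semicontinuous function on a compact set attains its maximum, so $\kappa_M\le C_1$ on $\Sigma_K$ for some finite $C_1=C_1(K)>0$. Finally, by the $\C$-homogeneity of $\kappa_M$ in its second argument, $\kappa_M(z,X)=\|X\|\,\kappa_M\!\big(z,X/\|X\|\big)\le C_1\|X\|$ for every $z\in K$ and $0\ne X\in T^{(1,0)}_zM$, the inequality being trivial when $X=0$. One may instead argue without upper semicontinuity: cover $K$ by finitely many relatively compact coordinate polydisk charts $\psi_j^{-1}:P_j\to V_j\subset M$; on a compact subset $L\subset P_j$ one has the elementary bound $\kappa_{P_j}(\zeta,Y)\le\|Y\|/\rho$ with $\rho>0$ bounding below the distance from $L$ to $\partial P_j$; and since the differential of the immersion $\psi_j^{-1}$ is bounded below on compacta, pushing this estimate forward by monotonicity and taking the maximum over $j$ produces $C_1(K)$.

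\smallskip I do not anticipate a genuine obstacle here: the proposition repackages standard facts. The steps needing care are the bookkeeping of the identification $T^{(1,0)}_zM\subset\C^d$ and the reading of $\|X\|$ as a Euclidean norm (so that monotonicity is applied to the honest inclusion map), and, in part~(2), verifying that $\Sigma_K$ is compact and that $\kappa_M$ is finite on it before invoking upper semicontinuity; in the alternative argument for~(2), the only slightly delicate point is the routine shrinking lemma yielding relatively compact sub-charts that still cover $K$.
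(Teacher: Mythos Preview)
Your proof is correct. For part~(1), both you and the paper use monotonicity under inclusion into a Euclidean ball; the paper fixes a single ball $B_R(0)\supset\overline{M}$ and then uses continuity of $\kappa_{B_R(0)}$ together with compactness of $\overline{M}\times\{\|X\|=1\}$ to obtain a positive infimum $c$, whereas your moving-centre trick (choosing $R$ larger than the Euclidean diameter so that $M\subset B(w,R)$ for every $w\in M$) reads off the explicit constant $c=1/R$ directly from the formula for $\kappa_{B(w,R)}$ at the centre. For part~(2), your primary argument via upper semicontinuity of $\kappa_M$ on the compact sphere bundle $\Sigma_K$ is a genuinely different and somewhat slicker route; the paper instead does exactly what you sketch as your alternative: it covers $K$ by finitely many coordinate polydisks, shrinks to relatively compact sub-polydisks, and applies Royden's local upper bound (Result~\ref{L:PoydiskBound}) on each piece, taking the maximum of the resulting constants. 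Your USC argument avoids the covering/refinement bookkeeping at the cost of invoking upper semicontinuity on $TM$ (which the paper does record just before Result~\ref{Res:C1GeodesicInf}); the paper's route is more hands-on but needs no semicontinuity input.
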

\begin{remark}\label{Rm:analogKoba}
Part~(1) together with Result~\ref{Res:C1GeodesicInf} implies 
immediately
that $c\, \Vert z - w \Vert \leq \koba_M(z, w)$ for all $ z, w \in M $. 
Similarly, working with Part~(2) above, one can show that
for any compact set $ K \subset M $, there exists a constant $ C_2 = C_2(K) > 0 $ 
such that $\koba_M(z, w) \leq C_2\, \Vert z - w \Vert$
for all $ z, w \in K $. 
\end{remark}
 
\noindent Before we prove the proposition above, we shall state a 
result that will be used in the proof of the proposition.
\begin{result}[\cite{Royden}, Proposition~2]\label{L:PoydiskBound}
	Let $\mathscr{M}$ be a complex manifold of dimension $ n $. If a compact set $ K \subset\mathscr{M}$ is contained in a coordinate polydisk, then there exists a constant $C = C(K)$ 
	such that
	\[
	\dkoba_{\mathscr{M}}(z, X) \leq C \Vert X \Vert 
	\]
	for all  $z \in K$, $X \in T^{(1, 0)}_{z}\mathscr{M}$.
\end{result} 
A co-ordinate polydisk in $\mathscr{M}$ is essentially a co-ordinate chart $(\psi, U, 
\psi(U))$, $U\subset \mathscr{M}$ being open,  such that $\psi(U)$ is a polydisk in $\C^n$. 

\begin{proof}[Proof of Proposition~\ref{P:PreliForExistenceLamdaKappa}]
The proof of part $(1)$ of the proposition is closely analogous to that of part 
$(1)$ of \cite[Proposition~3.5]{Bharali_Zimmer}, so we omit it here. 
\medskip

To establish part~$(2)$, we choose, for each $z\in K$, a coordinate 
polydisk $ U_z $ of $ M $ centred at $z$.
Let $ U_z' \subset U_z $ be another coordinate polydisk centred
at $ z $ that is relatively compact in $ U_z $ for all $ z \in K $. Since $ K $ is compact,
there are finitely many elements of $\{ U_z' : z \in K\}$ that cover $K$. Let $ \{U_{z_i}'\}_{i = 1}^{k} $ be a finite cover, for some $ k \in \posint$. Then, since $ \overline U'_{z_i} $ is a compact subset of $ U_{z_i} $ for all $ i = 1, \dots, k $, by Result~\ref{L:PoydiskBound}, 
\[
\dkoba_M(z, X) \leq C_i\Vert X \Vert
\] 
for all $ z \in U'_{z_i} $ and $ X \in T^{(1, 0)}_{z}M $, where $C_i$
is a constant depending on the compact set $ \overline U'_{z_i} $. 
Set $C 
= C(K) := \max\{C_i: i =1, \dots, k \}  $. Then
\[
\dkoba_M(z, X) \leq C \Vert X \Vert
\]
for all $ z \in K \subset \bigcup U'_{z_i} $ and $  X \in T^{(1, 0)}_{z}M  $. This shows that part (2) is true.
\end{proof}

\begin{definition}\label{Def:lk_geodesic}
Let $M\subset\C^d$ be as before and let $I \subset \R$ be an interval. A {\em real Kobayashi geodesic}
is a map $\sigma:I\lraw M$ that is an isometric embedding, i.e., for any $s,t\in I$, we have
\[
|s-t|=\koba_M(\sigma(s), \sigma(t)).
\]

\smallskip

For $\lambda \geq 1$ and $\kappa \geq 0$, a curve $\sigma: I 
\longrightarrow M$ is called a {\em $ (\lambda, \kappa) 
$-almost-geodesic} if 
\begin{enumerate}
\item for all $s, t \in I$, $(1/\lambda)|t -s| - \kappa \leq \koba_M\big(\sigma(t), 
\sigma(s)\big) \leq \lambda|t -s| + \kappa; \text{ and}$
\vspace{2mm}

\item $\sigma$ is absolutely continuous, so that $\sigma'(t)$ exists 
for almost every $t \in I$, and, for almost every $t \in I$, $\kappa_M(\sigma(t), \sigma^{\prime}(t)) \leq \lambda$.
\end{enumerate}
A curve $\sigma: I \longrightarrow M$ that is only required to satisfy condition~(1) above is called a {\em $(\lambda, \kappa)$-quasi-geodesic}. Note that
such curves are not necessarily continuous.
\end{definition}

\begin{remark}\label{Rm:HRT}
It is a fact that given $M$ as above, the distance space $(M,\koba_M)$ is a {\em length space} that is locally compact.
It is a consequence of the Hopf--Rinow Theorem that if
$(M, \koba_M)$ is (Cauchy-) complete, then given any two points 
$p\neq q\in M$, there is a real geodesic connecting them, i.e., a 
geodesic $\sigma:[0, \koba_M(p,q)]\lraw M$ with $\sigma(0)=p$ and
$\sigma(\koba_M(p,q))=q$. It is also a consequence of that theorem that
when $(M,\koba_M)$ is complete, it is proper, i.e., the closed
$\koba_M$-balls are compact. The reader is referred to \cite[Part~I, Chapter~3]{Bridson_Haefliger} for the definition of length
space and for the statement of the Hopf--Rinow Theorem.
\end{remark}

The following result is the consequence of part $(1)$ of the above proposition. Its proof is exactly the same 
as in \cite[Proposition~4.3]{Bharali_Zimmer}. Therefore we omit it here. 

\begin{proposition}\label{Prop:C_lips_of_almgeod}
Let $M\subset\C^d$ be as before. Then 
for any $\lambda \geq 1$, there exists $C = C(\lambda) > 0$
so that any $(\lambda,\kappa)$-almost-geodesic $\sigma: I 
\longrightarrow M$ is $C$-Lipschitz with respect to the Euclidean 
distance.  
\end{proposition}

We are now ready to present the principal result of this section. 

\begin{theorem}\label{T:exist_1kappa}
Let $M\subset\C^d$ be as before.
For any $ z, w \in M $ and any $ \kappa > 0 $, there 
exists a $ (1, \kappa) $-almost-geodesic $ \sigma : [a, \, b] 
\longrightarrow M $ such that $ \sigma(a) = z $ and $\sigma(b) = w$.	
\end{theorem}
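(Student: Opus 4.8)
The plan is to join $z$ to $w$ by a $\smoo^1$ curve that is almost length‑minimizing for $\koba_M$ and then to reparametrize it by Kobayashi arc length; the two properties in Definition~\ref{Def:lk_geodesic} will follow from Lemma~\ref{L:ResAbsConCurve} together with the comparison $\koba_M(\cdot,\cdot)\leq l_M(\cdot)$ coming from Result~\ref{Res:C1GeodesicInf}.

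First I would use Result~\ref{Res:C1GeodesicInf}(1) to fix a $\smoo^1$ curve $\gamma:[0,1]\lraw M$ with $\gamma(0)=z$, $\gamma(1)=w$ and $l_M(\gamma)\leq\koba_M(z,w)+\kappa$. If $z=w$ one may take for $\sigma$ the constant curve on a one‑point interval, so assume $z\neq w$; then $L:=l_M(\gamma)\geq\koba_M(z,w)>0$ by Remark~\ref{Rm:analogKoba}. Put $\rho(t):=\dkoba_M(\gamma(t),\gamma'(t))$, which is upper semicontinuous, hence bounded and Lebesgue integrable on $[0,1]$, and set $\phi(t):=\int_0^t\rho(r)\,dr$, so that $\phi:[0,1]\lraw[0,L]$ is absolutely continuous, non‑decreasing and onto, with $\phi'=\rho$ a.e.

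Next I would define $\sigma:[0,L]\lraw M$ by $\sigma(u):=\gamma(t)$ for any $t$ with $\phi(t)=u$. This is well defined: if $\phi$ is constant on a non‑degenerate interval $[\alpha,\beta]$, then $\rho=0$ a.e.\ there, so $\gamma'=0$ a.e.\ on $[\alpha,\beta]$ by Proposition~\ref{P:PreliForExistenceLamdaKappa}(1), hence $\gamma'\equiv0$ on $[\alpha,\beta]$ (being continuous), so $\gamma$ is constant there. Writing $t_u:=\max\phi^{-1}(u)$ and using Proposition~\ref{P:PreliForExistenceLamdaKappa}(1) again, for $u_1<u_2$ one gets $t_{u_1}<t_{u_2}$ and
\[
\| \sigma(u_2) - \sigma(u_1) \| = \Big\| \int_{t_{u_1}}^{t_{u_2}} \gamma'(r)\,dr \Big\| \leq \frac{1}{c} \int_{t_{u_1}}^{t_{u_2}} \rho(r)\,dr = \frac{1}{c}\,(u_2 - u_1),
\]
so $\sigma$ is Lipschitz, in particular absolutely continuous, with $\|\sigma'\|\leq1/c$ a.e. A change‑of‑variables argument for the monotone absolutely continuous $\phi$ (it suffices to treat indicators $g=\mathbf{1}_{[0,s]}$, for which $\{\,t:\phi(t)\leq s\,\}=[0,t_s]$), combined with the chain rule $\sigma'(\phi(t))\,\phi'(t)=\gamma'(t)$ (valid a.e., since $\sigma$ is Lipschitz and $\gamma$ is $\smoo^1$) and the positive $1$‑homogeneity of $\dkoba_M$ in its second argument, then yields, for every $u$,
\[
l_M\big(\sigma|_{[0,u]}\big) = \int_0^{u} \dkoba_M(\sigma(v),\sigma'(v))\,dv = \int_0^{t_u} \dkoba_M(\gamma(t),\gamma'(t))\,dt = \phi(t_u) = u,
\]
where on $\{\phi'=0\}$ both integrands vanish (there $\gamma'=0$ as well). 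Differentiating the relation $l_M(\sigma|_{[0,u]})=u$ gives $\dkoba_M(\sigma(u),\sigma'(u))=1$ for a.e.\ $u\in[0,L]$.

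Finally I would verify that $\sigma:[0,L]\lraw M$ is a $(1,\kappa)$‑almost‑geodesic. Property (2) of Definition~\ref{Def:lk_geodesic} holds since $\sigma$ is absolutely continuous with $\dkoba_M(\sigma(u),\sigma'(u))=1\leq1$ a.e. For property (1) with $\lambda=1$, fix $0\leq s\leq t\leq L$; then $l_M(\sigma|_{[s,t]})=t-s$, whence $\koba_M(\sigma(s),\sigma(t))\leq l_M(\sigma|_{[s,t]})=|t-s|\leq|t-s|+\kappa$. Since $l_M(\sigma)=L=l_M(\gamma)\leq\koba_M(\sigma(0),\sigma(L))+\kappa$, Lemma~\ref{L:ResAbsConCurve} applies to $\sigma$ and gives $|t-s|=l_M(\sigma|_{[s,t]})\leq\koba_M(\sigma(s),\sigma(t))+\kappa$, i.e., $\koba_M(\sigma(s),\sigma(t))\geq|t-s|-\kappa$. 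Together these are exactly the two inequalities in property (1), so $\sigma$ is the desired $(1,\kappa)$‑almost‑geodesic from $z$ to $w$. The main obstacle is making the arc‑length reparametrization rigorous: because $\dkoba_M$ is only upper semicontinuous and the Kobayashi speed $\rho$ may vanish on a set of positive measure, $\phi$ need not be invertible, and both the Lipschitz estimate for $\sigma$ and the length‑transfer identity rest essentially on the lower bound $c\|X\|\leq\dkoba_M(z,X)$ from Proposition~\ref{P:PreliForExistenceLamdaKappa}(1), which confines the vanishing of $\rho$ to the set where $\gamma'$ vanishes.
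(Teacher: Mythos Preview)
Your proof is correct and follows essentially the same strategy as the paper: pick a $\smoo^1$ curve $\gamma$ with $l_M(\gamma)\leq\koba_M(z,w)+\kappa$, reparametrize by Kobayashi arc length, and then combine the unit-speed condition with Lemma~\ref{L:ResAbsConCurve} and Result~\ref{Res:C1GeodesicInf} to obtain both inequalities in Definition~\ref{Def:lk_geodesic}.

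The only genuine difference is how the reparametrization is handled. The paper first arranges that $\gamma'(t)\neq 0$ for all $t$; together with Proposition~\ref{P:PreliForExistenceLamdaKappa}(1)--(2) this makes the arc-length function $f(t)=\int_0^t\dkoba_M(\gamma,\gamma')$ bi-Lipschitz, so it has an honest bi-Lipschitz inverse $g$ and the chain rule $\kappa_M(\sigma,\sigma')=1$ is immediate. You instead allow $\gamma'$ to vanish and argue directly that the (possibly non-injective) arc-length map $\phi$ still induces a well-defined Lipschitz $\sigma$, using only the lower bound in Proposition~\ref{P:PreliForExistenceLamdaKappa}(1) to control what happens where $\phi$ is flat. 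Your route is slightly more general and avoids the extra reduction, at the cost of a more delicate change-of-variables/chain-rule step (which you handle correctly: on $\{\phi'=0\}$ both $\gamma'$ and the transformed integrand vanish, and the preimage under $\phi$ of the null set where $\sigma'$ fails to exist contributes nothing since $\int_{\phi^{-1}(N)}\phi'=|N|=0$). The paper's version is cleaner to write; yours shows the regularity assumption $\gamma'\neq 0$ is dispensable.
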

\begin{proof} The proof is an adaptation of the proof of \cite[Proposition~4.4]{Bharali_Zimmer}.
Here we shall only give the main idea of the proof. First, 
by part (1) of Result~\ref{Res:C1GeodesicInf}, there exists a 
$\mathcal{C}^1 $ curve $ \gamma : [0, \, 1] \longrightarrow M $ such 
that $\gamma(0) = z$, $\gamma(1) = w$, and such that 
\[
l_M(\gamma) < \koba_M(z, w) + \kappa.
\]
In addition, we can assume that $ \gamma^{\prime}(t) \neq 0 $ for all $ t \in [0, \, 1] $.
Now, we consider the arc-length function, namely, 
\[
f \defeq t \mapsto \int_0^t \kappa_{M}(\gamma(r),\gamma^{\prime}(r))dr
\; : \; [0,1] \lraw [0,\infty).
\]
Using Proposition~\ref{P:PreliForExistenceLamdaKappa} above, 
we show that $ f $ is a bi-Lipschitz function,
and consequently strictly increasing. Let $ g: [0, l_M(\gamma)] \longrightarrow \unitin $ be the inverse
of $ f $, that is, $ g(f(t)) = t $ for all $ t \in \unitin $. Consider the reparametrization of $\gamma$ defined by 
$ \sigma := \gamma \circ g$. It is not difficult to show that $\sigma$ has unit-speed with respect to $\kappa_M$. This, in particular, 
implies that $ \sigma $ is an $ (1, \kappa) $-almost-geodesic. We refer the reader to \cite[Proposition~4.4]{Bharali_Zimmer} 
for more details. 
\end{proof} 

Note that given $\lambda\geq 1$ and $\kappa>0$, every $(1,\kappa)$-almost-geodesic is a $(\lambda,\kappa)$-almost-geodesic too. Hence
Theorem~\ref{T:exist_1kappa} implies the existence of $(\lambda,\kappa)$-almost-geodesics for any  $\lambda\geq 1$ and $\kappa>0$. 
\smallskip

We end this section with the following
simple result about $(1,\kappa)$-quasi-geodesics that we shall need 
later in the article. This result is a direct consequence of 
the definition of $(1,\kappa)$-quasi-geodesic together with the 
triangle inequality. So we omit the proof.
\begin{result}\label{Res:basic_but_important}
Let  $M\subset \C^d $ be as before. If $\sigma: [a,b]\lraw M$ is a 
$(1,\kappa)$-quasi-geodesic,
then for all $t \in [a,b]$ we have
	\[
	  \koba_{M}(\sigma(a), \sigma(b))\leqslant \koba_{M}(\sigma(a),\sigma(t))
	  + \koba_{M}(\sigma(t), \sigma(b))
	  \leqslant \koba_{M}(\sigma(a),\sigma(b)) + 3\kappa.
	\]
\end{result}

\section{The Extended Visibility Lemma and relations amongst different types of visibility}\label{S:local_cons_EVL}
In this section, we present the proofs of Theorem~\ref{T:EVL} and 
Corollary~\ref{C:finitetypeexceptsmallset}.
As hinted at in the introduction, the proofs of these results demonstrate the 
fact that the proof that a given domain or submanifold possesses the visibility
property is localizable. This realization also motivates us to reconsider geodesic 
visibility and its relation with weak visibility. 
This we present in subsection~\ref{SS:bnt_vis_submani}. Finally, in 
subsection~\ref{SS:comp_BZ_BNT}, we compare visibility and geodesic visibility.

\subsection{The proofs of Theorem~\ref{T:EVL} and Corollary~\ref{C:finitetypeexceptsmallset}}\label{SS:proof_th_EVL}

\begin{proof}[The proof of Theorem~\ref{T:EVL}] Suppose $\OM$ is not a 
visibility domain. Then there exist 
$\lambda\geq 1$ and $\kappa>0$ such that $\Omega$ does not have the visibility 
property with respect to
$(\lambda,\kappa)$-almost-geodesics. This implies that
there exist $ p \neq q \in \partial \OM $, sequences 
$(p_n)_{n \geq 1}$ and $(q_n)_{n \geq 1}$ in $ \OM $ that
converge to $p$ and $q$ respectively, and a sequence 
$(\gamma_n)_{n \geq 1}$  of 
$ (\lambda, \kappa) $-almost-geodesics\,---\,$ \gamma_n: [a_n, b_n] 
\longrightarrow \OM $
with $ \gamma_n(a_n) = p_n $ and $ \gamma_n(b_n) = q_n $ for all $ n \in \posint 
$\,---\,such that $\max_{a_n \leq t \leq b_n}\delta_\OM(\gamma_{n}(t)) \to 0$ as
$n\to\infty$.
By assumption, there exist $ p'\in \partial \OM $ and $ r > 0 $ such that $ p \in 
B(p', r) $, $ q \in \partial\OM \setminus \overline{B(p', r)} $
and $E \cap \partial B(p', r) = \emptyset $. Since $ p_n \to p $ and $ q_n \to q $ 
as $ n \to \infty $, we may assume 
that $p_n \in B(p',r)$ and
$q_n \in  \OM\setminus \overline{B(p',r)}$ for all $n$. Now, as
$ \gamma_n $ is a continuous path from $ p_n $ to $ q_n $, we have 
$\gamma_{n}\big([a_n, b_n]\big)\cap \partial B(p', r)\neq\emptyset$.
Let $ \alpha_n \in (a_n, b_n) $ be such that $ \xi_n \defeq \gamma_{n}(\alpha_n) \in \partial B(p', r) $,
and passing to a subsequence if necessary, we may assume that $ \xi_n \to \xi \in \partial \OM \cap \partial B(p', r) $ as $ n \to \infty $.
Note that $\xi \in \partial\OM \setminus E$; hence, by assumption, 
there exists a neighbourhood $ U $ of $ \xi $
such that conditions $(1)$, $(2)$ and $(3)$ occurring in the statement 
of Theorem~\ref{T:EVL} are 
satisfied. Now observe, since $\kobMetGrM_{\OM,\,V}(r) \leq  
\kobMetGrM_{\OM,\,U}(r) $
for any neighbourhood $ V \subset U $ of $ \xi $, that, 
shrinking $ U $ if necessary, we may assume that
$ \overline{U} \cap (E \cup \{p, q\}) = \emptyset $ and that 
$U$ satisfies the three conditions referred to above.
\smallskip

Let $ \epsilon > 0 $ be such that $ \overline{B(\xi, \epsilon)}\subset U $.
Since $ \xi_n \to \xi $ as $ n \to \infty $, we may assume (without loss of generality) that $\xi_n \in B(\xi,\epsilon)$ for all $n$.
Let
\[
\beta_n \defeq \inf\big\{t \in [\alpha_n, b_n] : \gamma_n(t) \in \partial B(\xi, \epsilon) \big\}.
\]
By definition of $\beta_n$ and the fact that $\partial B(\xi, \epsilon)$ is closed,
we have $\gamma_n(\beta_n)\in\partial B(\xi, \epsilon)$  and $a_n < \alpha_n < \beta_n < b_n $.
For every $n\in\mathbb{Z}_{+}$, define
$ \sigma_n \defeq \gamma_{n}|_{[\alpha_n,\,\beta_n]}: [\alpha_n, \beta_n] \longrightarrow \OM $. 
It is easy to see that $ \sigma_n\big([\alpha_n, \beta_n]\big)\subset 
\overline{B(\xi, \epsilon)}$ for all $ n $.
Note, since $ \sigma_n $ is a restriction of the $ (\lambda, \kappa) 
$-almost-geodesic $ \gamma_n $, 
that $ \sigma_n $ is also a $ (\lambda, \kappa) $-almost-geodesic, for all $ n $. 
Moreover, we have $\max_{\alpha_n \leq t \leq \beta_n} \dtb{\OM}(\sigma_n(t)) \to 
0$ as $n\to\infty$.
We observe that $\sigma_n\big([\alpha_n, \beta_n]\big)\subset \OM\cap U$ for all $n\in\posint$. 
From this point on, we argue exactly as in the proof of Theorem~1.5 in \cite{Bharali_Maitra} (and replace $M_\OM$ by $\kobMetGrM_{\OM,\,U}$ in
the latter proof) to get the result.
\end{proof}

We now present the proof of Corollary~\ref{C:finitetypeexceptsmallset}.

\begin{proof}[The proof of Corollary~\ref{C:finitetypeexceptsmallset}]
	To prove this corollary we shall use the Extended Visibility 
	Lemma. First, we shall show 
	that given $E$ as in the statement of 
	Corollary~\ref{C:finitetypeexceptsmallset} and
	$p\neq q\in\partial \OM$, conditions $(a)$ and $(b)$ in the 
	statement of Theorem~\ref{T:EVL}
	are satisfied. To this end, consider $E_0\defeq E_a\cup\{p, q\}$. 
	Then, owing to the finiteness of $E_0$, there exists an 
	$\epsilon_0$  
	such that $\overline{B(x, \epsilon_0)} \cap \overline{B(x',\epsilon_0)}= \emptyset$ for all $x\neq x'\in E_0$.
	Now define
	\[
	E_1\defeq\big(E\cup\{p,q\}\big) \setminus \big(\cup_{x\in E_a}\overline{B(x, \epsilon_0)}\big).
	\]
	Note that $E_1$ is a finite set disjoint from the compact set 
	$K:=\cup_{x\in E_a}\overline{B(x, \epsilon_0)}$.
	Therefore there exists an $\epsilon_1>0$ such that
	\begin{itemize}[leftmargin=14pt]
	\item $\overline{B(y, \epsilon_1)}\cap K=\emptyset \ \ \forall y\in E_1$ ;
	\smallskip
	
	\item $\overline{B(y, \epsilon_1)}\cap \overline{B(y', \epsilon_1)} = \emptyset \ \ \forall y\neq y'\in E_1$.
	\end{itemize}
	We now consider two cases:
	\smallskip
	
	\noindent{\bf Case 1.} $p\notin K$.
	\smallskip
	
	\noindent In this case if we take $p'= p$ and $r=\epsilon_1$ then 
	both the conditions $(a)$ and $(b)$ in Theorem~\ref{T:EVL} are 
	satisfied.
	\smallskip
	
	\noindent{\bf Case 2.} $p\in K$.
	\smallskip 
	
	\noindent There exists $x_0\in E_a$ such that $p\in\overline{B(x_0, \epsilon_0)}$. Consider the following 
	finite collection of mutually disjoint sets 
	\[
	\mathcal{B}\defeq\big\{B(x, \epsilon_0): x\in E_a\big\}\cup \big\{B(y, \epsilon_1): y\in E_1\big\}.
	\]
	Choose $\epsilon_2$ such that  $\epsilon_2 < 
	\mathsf{dist}(B_1, B_2)/ 4$ for all $ B_1\neq B_2 \in \mathcal{B}$. Then it follows that 
	$\mathcal{C}\defeq\big\{B(x, \epsilon_0+\epsilon_2): x\in E_a\big\}\cup \big\{B(y, \epsilon_1+\epsilon_2): y\in E_1\big\}$
	is a collection of mutually disjoint sets. Now, if we take $p'=x_0$ and $r=\epsilon_0+\epsilon_2$ then both the conditions $(a)$ and $(b)$ 
	 in Theorem~\ref{T:EVL} are satisfied.
	\smallskip
	
Now take an arbitrary point $q' \in \bdy \OM\setminus E$. By
\cite[Theorem~1]{Cho}, there exist a neighbourhood $U$ of $q'$ in 
$\C^d$ and positive numbers $c,\epsilon$ such that
\begin{equation*}
\forall z \in \OM \cap U, \; \forall v \in \C^d, \;
\dkoba_{\OM}(z,v) \geq c\frac{\|v\|}{\dtb{\OM}(z)^{\epsilon}}. 
\end{equation*}
Therefore, for $r>0$ sufficiently small, $\kobMetGrM_{\OM,\,U}(r) 
\leq (1/c)r^{\epsilon}$.
\smallskip
	
It is also a straightforward consequence of 
\cite[Theorem~7]{Nikolov_Lyubomir} that, by shrinking $U$ further if
necessary, we may assume that there exist a point $z_0 \in \OM$ and 
a real number $A$ 
such that, putting $f(x) \defeq A + (1/2) \log(x) \; \forall \, x \in 
(0,\infty)$, we have
\begin{equation*}
\forall z \in \OM \cap U, \; \koba_\OM(z_0,z) \leq 
f\big(1/\dtb{\OM}(z)\big). 
\end{equation*} 
We note that the estimate on $\kobMetGrM_{\OM,\,U}(r)$ derived above
also holds for this possibly smaller $U$. It is now easy to check 
that all the conditions in Theorem~\ref{T:EVL} are satisfied. 
Consequently, invoking Theorem~\ref{T:EVL}, we conclude that $\OM$ is 
a visibility domain.	
\end{proof}
\medskip

\subsection{Weak visibility and geodesic visibility}\label{SS:bnt_vis_submani}
Before we present our first result, we need a definition. Given a distance space 
$(X, d)$ and an arbitrary but fixed point
$o\in X$, the {\em Gromov product} is defined by
\[
 (x|y)_o \defeq (d(x,o)+d(y,o)-d(x,y))/2 \ \ \forall\, x,y\in X.
 \]
We now present 
 \begin{proposition} \label{P:charac_visib_Grom_prod_gen}
	Suppose that $M$ is a bounded, connected, embedded complex 
	submanifold of $\C^d$.	
	\begin{enumerate}
		\item If $M$ has the visibility property with respect to 
		$(1,\kappa)$-almost-geodesics for some $\kappa > 0$, then,
		for every $p,q \in \bdy M$ with $p \neq q$, $\limsup_{(x,y)\to (p,q)}
		(x|y)_o < \infty$.
		\smallskip
		
		\item If, for every $p, q \in \bdy M$ with $p \neq q$,
		$\limsup_{(x,y)\to (p,q)} (x|y)_o < \infty$ and $M$ is, in addition,
		complete with respect to its Kobayashi distance, then $M$ is a weak
		visibility submanifold. Further, $M$ is also a geodesic visibility 
		submanifold. 
	\end{enumerate} 
\end{proposition}

\begin{proof}
	\noindent {\bf Proof of (1):} The proof of this is very similar to that
	of \cite[Proposition~2.4]{Bracci_Nikolov_Thomas}. The only difference
	is that where the authors of \cite{Bracci_Nikolov_Thomas} dealt with 
	{\em geodesics} in domains, we deal with almost-geodesics in complex 
	submanifolds. Since, apart from this difference and the consequent trivial
	modifications (in particular, the use of Result~\ref{Res:basic_but_important}
	to provide a reverse triangle inequality for triples of points lying on an
	almost-geodesic), the proofs are almost identical, we omit the proof.	
	\medskip
	
	\noindent {\bf Proof of (2):} The proof of this is very similar to that 
	of \cite[Proposition~2.5]{Bracci_Nikolov_Thomas}. The only difference is the
	one pointed out in the proof of (1) above. Since, apart from this difference,
	the consequent trivial modifications, and the taking into account of certain
	obvious facts that are straightforward analogues of corresponding facts
	used in the proof of \cite[Proposition~2.5]{Bracci_Nikolov_Thomas}, the proofs 
	are almost identical, we omit the proof. 
\end{proof}

\begin{corollary}\label{C:BNTandkappaposit}
Let $M$ be as in the above proposition, and suppose 
that it is complete with respect to its Kobayashi distance. 
Then: $M$ is a geodesic visibility submanifold $\iff$ $M$ is a weak visibility 
submanifold 
$\iff$
$M$ is a $(1,\kappa)$-visibility submanifold for some $\kappa>0$.
\end{corollary}

\begin{proof}
The proof of this corollary follows from that of 
Proposition~\ref{P:charac_visib_Grom_prod_gen} once we note (see 
\cite[Proposition~2.5]{Bracci_Nikolov_Thomas}) that $M$ being a geodesic
visibility submanifold is equivalent to the finiteness condition on the Gromov 
product appearing in Proposition~\ref{P:charac_visib_Grom_prod_gen}
(work with real Kobayashi geodesics instead of $(1,\kappa)$-almost-geodesics in 
Part~$(1)$ of Proposition~\ref{P:charac_visib_Grom_prod_gen}).
\end{proof}

The hypotheses of the next proposition resemble a few of those in
Theorem~\ref{T:EVL}. The proposition provides a sufficient condition weaker than 
the one occurring in Proposition~\ref{P:charac_visib_Grom_prod_gen} 
for a submanifold to be a weak visibility submanifold.

\begin{proposition} \label{P:suff_cond_visib_Grom_prod_excp_set}
Let $ M $ be as above. 
Let $ E \subset \partial M $ be a closed set such that for any $ 
p \neq q \in \partial M $, there exist $ p'\in \partial M $ and $ 
r > 0 $ such that
$ p \in B(p', r) $, $ q \in \partial M \setminus 
\overline{B(p',r)}$ and $ E \cap \partial B(p', r) = \emptyset $. 
Suppose that for some (hence any) $ o \in M $, $ \koba_M(z, o) 
\to \infty $ as $ z \to \xi\in\partial M \setminus E $.
Suppose also that
\[
\forall \, p,q \in \bdy M \setminus E \text{ with } p \neq q, \;
\limsup_{(x,y) \to (p,q)} (x|y)_o < \infty.
\]
Then $M$ is a weak visibility submanifold. When $(M,\koba_{M})$ is complete, $M$ 
is a geodesic visibility submanifold.	
\end{proposition}

\begin{proof}
	The proof is similar to that of 
	\cite[Proposition~2.5]{Bracci_Nikolov_Thomas}. The ideas behind 
	the essential modifications required in the proof are the same as 
	those occurring in the proof of Theorem~\ref{T:EVL}. 
	\smallskip
	
	Assume, to get a contradiction, that
	there exists $\kappa \geq 0$ such that $M$ is not a 
	$(1,\kappa)$-visibility submanifold. Then there exist $p,q \in \bdy 
	M$, $p \neq q$, sequences $(p_n)_{n \geq 1}$ and $(q_n)_{n \geq 1}$ in $M$ 
	such that $p_n \to p$ and $q_n \to q$ as $n \to \infty$, and a
	sequence $(\gamma_n)_{n \geq 1}$ of $(1,\kappa)$-almost-geodesics, 
	$\gamma_n : [a_n,b_n] \longrightarrow M$, such that  
	$\gamma_n(a_n)=p_n$, $\gamma_n(b_n)=q_n$ for all $n \in \posint$ and 
	such that 
	\begin{equation*}
		\max_{a_n \leq t \leq b_n} \dtb{M}(\gamma_n(t)) \to 0 \,\,\, 
		\text{as} \,\,\, n \to \infty.
	\end{equation*}
	By hypothesis, there exist $p' \in \bdy M$ and $r>0$ such that
	$p \in B(p',r)$, $q \notin \overline{B(p',r)}$ and such that
	$\bdy B(p',r) \cap E = \emptyset$. 
	We now use the arguments in the proof of Theorem~\ref{T:EVL} and those needed
	to complete that of Proposition~\ref{P:charac_visib_Grom_prod_gen} (part~(2))
	sketched above to conclude that there exist 
	$\alpha_n,\beta_n$, $a_n < \alpha_n < \beta_n < b_n$, a point $\xi 
	\in \bdy B(p',r) \cap \bdy M$, a neighbourhood $U$ of $\xi$, and
	$t_n \in [\alpha_n,\beta_n]$ such that $\overline{U} \cap E = 
	\emptyset$ and such that, writing $\sigma_n \defeq 
	\gamma_n|_{[\alpha_n, \beta_n]}$, $\xi_n \defeq 
	\sigma_n(\alpha_n)$, $\eta_n \defeq \sigma_n(\beta_n)$ and $w_n 
	\defeq \sigma_n(t_n)$, we have
	\begin{itemize}[leftmargin=14pt]
	\item $\sigma_n([\alpha_n,\beta_n]) \subset U$, $\xi_n \to \xi$ as 
	$n\to\infty$ and $\eta_n$ converges to some point $\eta\neq\xi$ of 
	$U \cap \bdy M$;
	\smallskip
	
	\item for all $n\in\posint$, $\|\xi_n-w_n\|=\|\eta_n-w_n\|$ and $(w_n)_{n \geq 1}$ converges to some point $w$ of $U \cap 
	\bdy M$ that satisfies $\|w-\xi\|=\|w-\eta\|$.
	\end{itemize}
	
	Since $\xi,w$ and $\eta$ are all distinct points of $\bdy M \setminus E$ and 
	$(\xi_n)_{n \geq 1}$, $(w_n)_{n \geq 1}$ and $(\eta_n)_{n \geq 1}$ converge to 
	$\xi,w$ and $\eta$, respectively, therefore, by 
	hypothesis, there exists $C < \infty$ such that
	\begin{equation*}
		\limsup_{n \to \infty} \ (\xi_n|w_n)_o \leq C \text{ and } \limsup_{n 
			\to \infty} \ (w_n|\eta_n)_o \leq C.
	\end{equation*}
	Therefore we may, without loss of generality, suppose that there exists
	$C<\infty$ such that, for all $n \in \posint$, $2(\xi_n|w_n)_o \leq C$ 
	and $2(w_n|\eta_n)_o \leq C$.
	From this point on, we argue exactly as in the concluding part of the proof of
	\cite[Proposition~2.5]{Bracci_Nikolov_Thomas} (replacing geodesics by 
	almost-geodesics and using Result~\ref{Res:basic_but_important} to obtain
	reverse triangle inequalities where needed) to obtain the contradiction that
	$\limsup_{n \to \infty}\koba_M(w_n,o)<\infty$ (recall that $(w_n)_{n \geq 1}$ 
	converges to $w \in \bdy M \setminus E$). This contradiction shows that $M$
	must be a weak visibility submanifold. It is also clear (we simply work
	with geodesics instead of $(1,\kappa)$-almost-geodesics) that, 
	when $(M,\koba_{M})$ is complete, it is a geodesic visibility submanifold.       		
\end{proof}

\begin{remark} \label{rm:act_visib_quasi-geod}
The proof above actually shows that under the hypotheses of the above
proposition, $M$ satisfies the visibility property with respect to
continuous $(1,\kappa)$-quasi-geodesics.
\end{remark}

Before we state the next corollary, we need two definitions. The first
generalizes Definition~\ref{Def:C-str-cvx_smoo} to the case of convex
domains whose boundaries are not necessarily smooth (it is not 
difficult to check that the following definition is consistent with
Definition~\ref{Def:C-str-cvx_smoo}).

\begin{definition} \label{def:C-str-cvx_gen}
Given a convex domain $\OM \subset \C^d$, a boundary point $p$ of 
$\OM$ is said to be {\em a $\C$-strictly convex boundary point} if for
every complex affine line $L$ such that $L \cap \OM = \emptyset$ and
such that $p \in L$, $(L \cap \bdy \OM) \setminus \{p\} = \emptyset$.
\end{definition}

We now give the following definition, which we have adopted from 
\cite{Bracci_Nikolov_Thomas} (see  
\cite[Definition~6.12]{Bracci_Nikolov_Thomas}). 

\begin{definition} \label{def:loc_C-str-cvx}
Given a domain $\OM \subset \C^d$, a boundary point $p$ of $\OM$ is
said to be {\em locally $\C$-strictly convex} if there exists a
bounded $\C^d$-neighbourhood $U$ of $p$ and a biholomorphism $\Psi:
U \to \Psi(U)$ such that $\Psi(U \cap \OM)$ is a convex domain and
such that $\Psi(p)$ is a $\C$-strictly convex boundary point of
$\Psi(U \cap \OM)$.  
\end{definition}

We are now ready to state and prove the following corollary.
\begin{corollary}\label{cor:C-str-cvx_Dini_imp_visib}
	Let $\OM$ be a bounded domain in $ \C^d$. Let $E \subset \bdy\OM$ 
	be a closed set such that
	for any $p, q \in \bdy\OM$, $p \neq q$, there exist $p'\in \bdy\OM$ 
	and $r > 0$ such that $p \in B(p',r)$,
	$q \in \bdy\OM \setminus \overline{B(p', r)}$ and $E \cap 
	\bdy B(p', r) = \emptyset$. Further, assume that 
	every $q' \in \bdy\OM \setminus E$ is both locally $\C$-strictly 
	convex and a $\smoo^{1,\,{\rm Dini}}$-smooth boundary point. 
	Then $\OM$ is a weak visibility domain. 
	Further, if $(\OM,\koba_\OM)$ is complete, $\OM$ is a geodesic
	visibility domain.  
\end{corollary}

\begin{proof}
	By Proposition~\ref{P:suff_cond_visib_Grom_prod_excp_set},
	if we can show that
	$\koba_\OM(o,z) \to \infty$ as $z$ tends to an arbitrary point of 
	$\bdy\OM \setminus E$ and that, for every $p,q \in \bdy\OM \setminus E$
	with $p\neq q$, $\limsup_{(x,y) \to (p,q)} (x|y)_o < \infty$, the
	result will be proved. 
	\smallskip
	
	Firstly note that, since every $p \in 
	\bdy \OM \setminus E$ is locally $\C$-strictly convex, every such
	point is also, by \cite[Theorem~6.13]{Bracci_Nikolov_Thomas}, and
	to use the terminology of 	
	\cite[Definition~6.1]{Bracci_Nikolov_Thomas},  
	a $k$-point. This means that
	\begin{itemize}
		\item[(*)] $\forall \,$ neighbourhood $W$ of $p$, 
		$\liminf_{z \to p} \big( \koba_\OM(z,\OM \setminus W) - (1/2)\log\big(1/\dtb{\OM}(z)\big) > -\infty.$ 
	\end{itemize}
	In particular, $\lim_{z \to p} \koba_\OM(o,z)=\infty$.
	But (*) also implies (see \cite[Theorem~6.13]{Bracci_Nikolov_Thomas}) 
	that, if $p$ and $q$ are a pair of distinct points
	in $\bdy \OM \setminus E$, then they satisfy the log-estimate (see 
	\cite[equation~2.5]{Bracci_Nikolov_Thomas}), i.e., there exist 
	neighbourhoods $V$ and $W$ of $p$ and $q$, respectively, in $\C^d$, and
	$C < \infty$ such that, for every $x \in V \cap \OM$ and every 
	$y \in W \cap \OM$,
	\begin{equation} \label{eq:kob_xy_stan_est}
		\koba_\OM(x,y) \geq (1/2) \log\big(1/\dtb{\OM}(x)\big) + (1/2) \log \big( 1/\dtb{\OM}(y) \big) - C.
	\end{equation}
	Further, it is a straightforward consequence of 
	\cite[Theorem~7]{Nikolov_Lyubomir} that we may choose $V$ and $W$ to be
	so small that there exists $C_1 < \infty$ such that, for every $x \in
	V \cap \OM$ and every $y \in W \cap \OM$,
	\begin{align*}
		\koba_\OM(o,x) &\leq (1/2)\log\big(1/\dtb{\OM}(x)\big) + C_1 \text{ and}
		\\
		\koba_\OM(o,y) &\leq (1/2)\log\big(1/\dtb{\OM}(y)\big) + C_1.
	\end{align*}
	Adding the two inequalities above, we obtain:
	\begin{equation*}
		\koba_\OM(o,x)+\koba_\OM(o,y) \leq (1/2)\log\big(1/\dtb{\OM}(x)\big) +
		(1/2)\log\big(1/\dtb{\OM}(y)\big) + 2C_1.
	\end{equation*}
	Combining the inequality above with \eqref{eq:kob_xy_stan_est}, we get
	$2(x|y)_o\leq 2C_1+C$.
	This shows that \linebreak $\limsup_{(x,y) \to (p,q)} (x|y)_o < \infty$. 
	By this, the fact that $\lim_{z \to p} \koba_\OM(o,z)=\infty$, 
	and the remark made at the beginning of the proof, the proof of the
	corollary is complete.  
\end{proof}
\medskip

\subsection{Comparison between visibility and geodesic visibility}\label{SS:comp_BZ_BNT}

Let $M$ be a bounded, connected, embedded complex submanifold of $\C^d$ such that 
$(M,\koba_M)$ is complete. Suppose that $M$ possesses the visibility property. 
Then, in particular, it possesses the weak visibility property. 
Corollary~\ref{C:BNTandkappaposit} then implies that $M$ possesses the geodesic
visibility property.
\smallskip

The following proposition shows that in the presence of Gromov hyperbolicity of 
$(M,\koba_M)$, visibility and geodesic visibility are equivalent.

\begin{proposition}\label{P:BNT&lambdakappavisibility}
Suppose that $M$ is a bounded, connected, embedded complex submanifold of $\C^d$ 
such that $(M,\koba_M)$ is a complete Gromov hyperbolic distance space. Then $M$ 
is a visibility submanifold if and only if it is a geodesic visibility submanifold.
\end{proposition}

\begin{proof}
That visibility implies geodesic visibility (in the presence of 
completeness) is clear as argued 
above. Note that we do not need Gromov hyperbolicity for this 
implication.
\smallskip

Conversely, in case $(M,\koba_{M})$ is complete and Gromov hyperbolic, the 
Geodesic Stability Theorem \cite[Chapter~III.H, Theorem~1.7]{Bridson_Haefliger}
(which states, roughly speaking, that in Gromov hyperbolic spaces geodesics and
quasi-geodesics are Hausdorff-uniformly close)
implies easily that if $M$ satisfies the visibility property with respect to
geodesics, then it also satisfies the visibility property with respect to all
$(\lambda,\kappa)$-quasi-geodesics, and hence, in particular, that it is a
visibility submanifold.
\end{proof}
\begin{remark}\label{Rm:nongromov_stillvisible}
Bharali--Zimmer constructed convex Goldilocks domains that are not 
Gromov 
hyperbolic (see, for example, \cite[Lemma~2.9]{Bharali_Zimmer}). In 
the next section, we construct two examples of 
convex domains that are not Goldilocks but that possess versions of 
the visibility property. Namely, the first example possesses the 
geodesic visibility property, whereas the second one possesses the 
(full-fledged) visibility property. 
\end{remark}

\section{Two examples}\label{S:twoexam}
In this section, we present two examples of bounded convex domains that are not 
Goldilocks domains; more precisely, condition~(1) in 
Definition~\ref{D:Goldilocks_domain} is not satisfied for either domain.
The domain in the first example is a weak visibility domain, while the 
domain in the second example is a visibility domain. We emphasize that it does 
{\bf not} seem to be easy to either prove or disprove that the domain in the
first example satisfies the visibility property.

\subsection{Example of a weak visibility domain that does not satisfy condition~(1) in Definition~\ref{D:Goldilocks_domain}.}\label{SS:BNTbutnotgoldilocks}
Consider $\Phi_0:\C^2\longrightarrow\mathbb{R}$ defined by
\[
\Phi_0(z)\defeq
\begin{cases}
\exp\big(\!\!-{1}/{|z_1|^2}\big)-\iprt(z_2), & \text{if } z_1\neq 0,\\
-\iprt(z_2), & \text{if } z_1= 0.
\end{cases}
\]
There exists an $\epsilon>0$ such that $\Phi_0$ is convex in the ball $B(0, 2\epsilon)$ (in fact, any $\epsilon<1/\sqrt{6}$ will work).
We now choose a $\smoo^{\infty}$ function $\psi: \C^2 \lraw [0,1]$ 
such that $\psi \equiv 1$ on $B(0,2\epsilon)$ and such that  
$\mathsf{supp} \, \psi \subset B(0,3\epsilon)$.
We let $\Phi \defeq \Phi_0 \cdot \psi$ 
and we also let $c_0\defeq\sup_{z\in\C^2}\big(\!\!-\Phi(z)\big)=\sup_{B(0, 3\epsilon)}\big(\!\!-\Phi(z)\big)>0.$ 
\smallskip

For $n\geq 3$, we 
consider $\chi:\R\longrightarrow\R$ defined by 
$\chi(t)=(t-\epsilon^2)^n$ for all $t>\epsilon^2$ and $0$ otherwise.
Let $c_1 \defeq \inf_{t \geq (3\epsilon/2)^2} \chi(t)  $, and set $ C \defeq c_0/c_1 $. Define 
\[
\Psi(z) \defeq C\,\chi(\Vert z \Vert^2) \ \ \forall z \in \C^2;
\]
and observe:
\begin{itemize}[leftmargin=14pt]
	\item $\Psi $ is a $\smoo^2$-smooth non-negative, convex function on $ \C^2 $ that is 
	equal to zero on $ \overline{B(0, \epsilon)}$, strongly convex locally and strictly positive on $ \C^2 \setminus \overline{B(0, \epsilon)}$.
	\smallskip
	
	\item $ \Psi(z) \geq c_0  $ for all $ z \in \C^2 \setminus\big(B(0, 3\epsilon/2))$. Hence $ \Psi(z) + \Phi(z)\geq 0$
	 $\forall z \in \C^2 \setminus\big(B(0, 3\epsilon/2)\big)$.
	 \smallskip
	 
	\item For any $z \in B(0, \epsilon)$,  $\Psi(z) + \Phi(z) = \Phi(z)=\Phi_0(z)$.
\end{itemize}

Now consider the domain 
\[
\Omega\defeq\big\{(z_1, z_2)\in\C^2 : \rho(z)\defeq\Psi(z)+\Phi(z) <0\big\}.
\]
Note that $\Omega\subset B(0, 3\epsilon/2)$, where $\rho=\Psi+\Phi_0$
is convex; consequently, $\Omega$ is a bounded convex domain.
By computing the gradient of $\rho$, we see that there exists at most 
one point $p_0\in \bdy\OM$ where the gradient vanishes, and this point
is of the form $p_0=(0,ic)$.
Moreover, $p_0\in\overline{B(0, 3\epsilon/2)}\setminus{\overline{B(0, 
\epsilon)}}$. It follows then that $\OM$ is a bounded convex domain such
that $\bdy\OM\setminus\{p_0\}$ is at least $\mathcal{C}^2$-smooth. It 
is also clear that any point 
$x\in\big(\bdy\OM\setminus\{p_0\}\big)\cap\big(
\overline{B(0, 3\epsilon)}\setminus{\overline{B(0, \epsilon)}}\big)$ is 
a strongly convex boundary point of $\OM$.
\medskip

\noindent{{\bf $\OM$ does not satisfy condition~(1) in Definition~\ref{D:Goldilocks_domain}.}}
We show formally that $\Omega$ does not satisfy condition~(1) in the 
definition of a Goldilocks domain. So what we need to do is show that
for every $\epsilon_0>0$ sufficiently small, $\int_{0}^{\epsilon_0} (\kobMetGrM_\OM(r)/r) 
dr = \infty$. 
The way our domain $\OM$ has been defined, there exists $r_0>0$ such 
that
\begin{equation*}
	\OM \cap B(0,r_0) = \big\{(z_1,z_2) \in B(0,r_0) : \iprt(z_2)>
	\exp(-1/|z_1|^2)\big\}.
\end{equation*}
Fix $\epsilon_0 \in (0,r_0)$. It is immediate that for every $r\in (0,\epsilon_0)$,
$\dtb{\OM}\big((0,ir)\big) \leq r$ (consider the boundary point $0_{\C^2}$ of 
$\OM$). 
Now we use the elementary upper bound on the Kobayashi metric to write
\begin{equation*}
	\dkoba_\OM\big((0,ir);(1,0)\big) \leq 1/\Delta_\OM\big((0,ir);(1,0)\big),
\end{equation*}
where $\Delta_\Omega(z; v)\defeq\sup\big\{t>0\mid\big(z+(t\unitdisk)({v}/{\|v\|})\big)\subset\OM\big\}$.
From the explicit description of $\OM \cap B(0,r_0)$, it follows that 
$\Delta_\OM\big((0,ir);(1,0)\big)=1/\sqrt{\log(1/r)}$. Therefore
\begin{equation*}
	\kobMetGrM_\OM(r) \geq \frac{1}{\dkoba_\OM\big((0,ir);(1,0)\big)} \geq 
	\Delta_\OM\big((0,ir);(1,0)\big) = 1/\sqrt{\log(1/r)}.
\end{equation*}
Since
\begin{equation*}
	\int_{0}^{\epsilon_0} \frac{dr}{r\sqrt{\log(1/r)}} = \infty,
\end{equation*}
$\OM$ is \emph{not} a Goldilocks domain.
\medskip

\noindent{\bf Every point of $\bdy\OM$ except possibly $p_0$ is $ \C $-strictly 
convex.}
Consider
\begin{equation} \label{dfn:S}
S\defeq\bdy\OM\cap\overline{B(0, \epsilon)}=\overline{B(0,\epsilon)} 
\cap \big\{z\in\C^2 : \Phi_0(z)=0\big\}.
\end{equation}

Then, as noticed earlier, any $p\in \partial\OM \setminus S$, $p\neq 
p_0$, is a strongly convex, and therefore also a $\C$-strictly 
convex boundary point.
Next, we shall show that any $p\in S$ is also a $\C$-strictly 
convex boundary point. This will establish that every point of 
$\bdy\OM$ except possibly $p_0$ is a $\C$-strictly convex boundary
point.
An easy computation, taking into account the fact that $\Psi \equiv 0$ on 
$B(0,\epsilon)$, shows that, for all $p\in S$,
\begin{equation}
H_p(\bdy\OM)  
= \Big\{ \xi = (\xi_1, \xi_2) \in \C^2: s(p_1)\bar p_1\xi_1 - (1/2i)\xi_2 = 0 
\Big\} = \mathsf{span}_{\C}\big\{(1,\, 2is(p_1)\bar{p}_1)\big\},
\end{equation}
where $s(p_1) \defeq \exp(-1/\vert p_1\vert^2)/\vert p_1 \vert^4$ 
for all $p_1 \neq 0$, and where $s(0) \defeq 0$. (In particular, when $p_1=0$, $H_p(\bdy\OM)=\mathsf{span}_{\C}\big\{(1,0)\big\} = 
\C\times\{0\}$.)
Write $u(p) \defeq 2is(p_1)\bar{p}_1$ for all $p \in S$. Then, for every $p\in S$, 
$H_p(\bdy\OM) = \mathsf{span}_{\C} \big\{ (1,u(p)) \big\}$. From this it follows 
that $\OM$ fails to be $\C$-strictly convex at some point of $S$ if and only if 
there exist $p\in S$ and $\zeta \in \C \setminus \{0\}$ such that $p + \zeta 
(1,u(p)) \in \bdy\OM$. From this it follows easily that
\[ \forall\,t\in [0,1],\; p + t \zeta (1,u(p)) \in S. \]
Since $S$ is as given in \eqref{dfn:S}, this implies that
\[ \forall\,t\in [0,1],\; \Phi_0\big(p + t \zeta (1,u(p))\big) = 0. \]
But, writing down the definition of $\Phi_0$ and recalling that $\zeta\neq 0$, we
see that this yields an immediate contradiction. From this contradiction it 
follows that $\OM$ is $\C$-strictly convex at every point of $S$, hence (recalling 
what we observed previously) that $\OM$ is $\C$-strictly convex at every boundary
point except, possibly, $p_0$.
\smallskip

Now, since $\OM$ also has at least $\smoo^2$-smooth boundary, we see that we may 
appeal to Corollary~\ref{cor:C-str-cvx_Dini_imp_visib} to conclude that $\OM$ is a 
weak
visibility domain. We emphasize that it is unclear whether $\OM$ is a
visibility domain. The reason is that all boundary points 
of the type
$(0,t)$ with $t$ real, $|t|<\epsilon$, are points of infinite type, as
can easily be checked. Therefore we cannot invoke any known theorem to 
conclude visibility.
	
\subsection{Example of a domain that satisfies the condition in Corollary~\ref{C:finitetypeexceptsmallset} but does not satisfy condition~(1) in Definition~\ref{D:Goldilocks_domain}} \label{subsec:BZ_VisibNotGold}
Consider $\Phi_0:\C^2\longrightarrow\mathbb{R}$ defined by
\[
\Phi_0(z)\defeq
\begin{cases}
\exp\big(\!\!-{1}/{\|z\|^2}\big)-\iprt(z_2), & z\neq 0,\\
0, &  z= 0,
\end{cases}
\]
where $\|z\|$ denotes the Euclidean norm of $z \in \C^2$.
There exists an $\epsilon>0$ such that $\Phi_0$ is convex in the ball 
$B(0, 2\epsilon)$ (in fact, any $\epsilon<1/2\sqrt{2}$ will work).
We now choose a $\smoo^{\infty}$ function $\psi: \C^2 \lraw [0,1]$ 
such that $\psi \equiv 1$ on $B(0,2\epsilon)$ and such that  
$\mathsf{supp} \, \psi \subset B(0,3\epsilon)$.
We let $\Phi \defeq \Phi_0 \cdot \psi$ and we also let
$c_0\defeq\sup_{z\in\C^2}\big(\!\!-\Phi(z)\big)=\sup_{B(0,3\epsilon)}
\big(\!\!-\Phi(z)\big)>0$. 
\smallskip

We choose another function $\chi:[0,\infty) \lraw [0,\infty)$ that is
(1) $\smoo^{\infty}$, (2) identically $0$ on $[0,\epsilon^2]$ and (3)
strictly increasing on $[\epsilon^2,\infty)$ and strongly convex on 
$(\epsilon^2,(\epsilon+\delta)^2)$ (that is, has double derivative 
positive) for some small $\delta>0$ (for example, one could take 
$\chi=\exp\big(\!-1/(t-\epsilon^2)\big)$ when $t> \epsilon^2$ and $0$ 
otherwise).
Let $c_1 \defeq \inf_{t \geq (\epsilon+\delta/2)^2} \chi(t)$, and set 
$C \defeq c_0/c_1$. Define 
\[
\Psi(z) \defeq C \, \chi(\Vert z \Vert^2) \ \ \forall \, z \in \C^2;
\]
and observe:
\begin{itemize}[leftmargin=14pt]
\item $\Psi$ is a $\smoo^{\infty}$-smooth, non-negative
function on $\C^2$ that is equal to zero on $\overline{B(0,\epsilon)}$ 
and that is strongly convex and strictly positive on $B(0, \epsilon+\delta) \setminus 
\overline{B(0,\epsilon)}$.
\smallskip
		
\item $\Psi(z) \geq c_0$ for all $z \in \C^2 \setminus 
B(0,\epsilon+\delta/2)$. Hence $\Psi(z) + \Phi(z)\geq 0$
$\forall z \in \C^2 \setminus B(0, \epsilon+\delta/2)$.
\smallskip
		
\item For any $z \in B(0, \epsilon)$,  $\Psi(z) + \Phi(z) = 
\Phi(z)=\Phi_0(z)$.
\end{itemize}
	
Now consider the domain 
\[
\Omega\defeq\big\{z=(z_1, z_2)\in\C^2 : \rho(z)\defeq\Psi(z)+\Phi(z) 
<0\big\}.
\]
Note that $\Omega\subset B(0, \epsilon+\delta/2)$, on which 
$\rho=\Psi+\Phi_0$ is convex; consequently, $\Omega$ is a bounded convex domain.
By computing the gradient of $\rho$, we see that there 
exists at most one point $p_0\in \bdy\OM$ where the gradient 
vanishes, and this point is of the form $p_0=(0,ic)$.
Moreover, $p_0\in\overline{B(0,\epsilon+\delta/2)} \setminus 
{\overline{B(0,\epsilon)}}$. It follows then that $\OM$ is a bounded 
convex domain such that $\bdy\OM\setminus\{p_0\}$ is 
$\smoo^{\infty}$-smooth. It is also clear that any point 
$x \in \big( \bdy\OM \setminus \{p_0\} \big) \cap \big(
\overline{B(0,\epsilon+\delta/2)} \setminus \overline{B(0, \epsilon)} \big)$ is 
a strongly convex boundary point of $\OM$, whence it is of finite 
type.
Set $S \defeq \bdy \OM \cap \overline{B(0,\epsilon)}$ and observe that
\begin{equation} \label{eq:form_of_S}
S = \overline{B(0,\epsilon)} \cap \big\{ z\in\C^2 : \Phi_0(z)=0 \big\}.	
\end{equation}
It is also easy to show that any point in $S$ different from $0$ is a boundary 
point of $\OM$ of finite type. Hence, using 
Corollary~\ref{C:finitetypeexceptsmallset}, it follows that $\OM$ is a visibility 
domain. That $\OM$ is a geodesic visibility domain follows easily from 
Corollary~\ref{C:BNTandkappaposit}
\medskip
	
\noindent{\bf $\Omega$ does not satisfy condition~(1) in Definition~\ref{D:Goldilocks_domain}.}
Note that
\begin{equation*}
\OM \cap B(0,\epsilon/2) = \big\{(z_1,z_2) \in B(0,\epsilon/2) : 
\iprt(z_2) > \exp\big(-1/\|z\|^2\big)\big\}.
\end{equation*}
From the above expression,
we see that, for $r$ sufficiently small, $p_r \defeq (0,ir) \in \OM$. 
Write $v \defeq (1,0)$; we regard $v$ as a unit vector in $\C^2$. It is 
easy to see that $\Delta_{\OM}(p_r,v) \geq \rho$, where $\rho$ is given by
$\rho = \sqrt{\big(1/\log(1/r)\big)-r^2}$.
Therefore, using arguments similar to those used in dealing with 
Example~\ref{SS:BNTbutnotgoldilocks}, we readily obtain $\kobMetGrM_\OM(r) \geq 
1/\dkoba_\OM(p_r,v) \geq \rho = \sqrt{\big(1/\log(1/r)\big)-r^2}$.
Therefore, to prove that $\OM$ does not satisfy Condition~1 in the
definition of a Goldilocks domain, it suffices to show that for $\delta
> 0$ sufficiently small so that the integrand makes sense,
\begin{equation*}
\int_0^\delta \frac{1}{r} \sqrt{\frac{1}{\log(1/r)}-r^2}\, dr = \infty. 
\end{equation*}
This follows easily.
Therefore $\OM$ is not a Goldilocks domain.
\medskip
  
\section{Properties of Visibility Subspaces and the Continuous Extension of Kobayashi Isometries} \label{S:Ext_Kob_Iso}

In this section we shall make the requisite comments about the proof of 
Theorem~\ref{th:ext_of_id_vis}, and also prove
Theorem~\ref{th:cont_ext_koba_isom} and related corollaries. In the first 
subsection below, we make certain observations regarding geodesic subspaces and 
also present two lemmas about visibility subspaces, which will be needed in 
the proofs of the aforementioned theorems. In the next subsection, we deal with
the proofs proper.
\smallskip

As promised in the Introduction, we first provide a sketch of an argument 
showing why every subspace $V_f$ of
$\unitdisk^n$, $n\geq 2$, of the form $V_f = \{(z,f(z)) : z \in \unitdisk\}$,
where $f=(f_1,\dots,f_{n-1}) : \unitdisk \lraw\unitdisk^{n-1}$ is a
holomorphic map, is a visibility subspace if $f$ extends continuously
to $\overline{\unitdisk}$. 
It is easy to see that every
point of $\bdy_a V_f = \overline{V}_f \setminus V_f$ is of the form 
$(\zeta,f(\zeta))$, where $|\zeta|=1$. Using this fact, the visibility property 
of $\unitdisk$, the explicit form of $\koba_{\unitdisk^n}$, and the
distance-decreasing property of holomorphic maps with respect to the Kobayashi
distance, it is now easy to show that any pair of distinct points of $\bdy_a 
V_f$ satisfies the visibility property with respect to geodesics of 
$\koba_{{\unitdisk}^n}$.

\subsection{Preliminary observations regarding geodesic subspaces and two preparatory lemmas}
\label{SS:prelim_obs_geo_sub}  
Given a geodesic subspace $S$ of $(M,\koba_M)$, it is easy to see from the
definition that $S$ is closed in $M$, that $(S,\koba_{M}|_{S\times S})$
is locally compact, and that the 
other crucial hypothesis in \cite[Theorem~8.1]{Kobayashi} is satisfied. 
Therefore, by this latter result, the completeness of $(S,\koba_M|_{S 
\times S})$ is equivalent to the condition that every closed ball 
is compact, i.e., the distance space is proper. In particular, if
$z_0$ is any fixed point of $S$, if $p$ is a fixed but arbitrary
point of $\bdy_a S \defeq \overline{S} \setminus S$, and if
$(z_n)_{n \geq 1}$ is a sequence in $S$ converging, in the 
Euclidean sense, to $p$, then $\lim_{n \to \infty} 
\koba_M(z_0,z_n)=\infty$. This latter fact also implies that
$\bdy_a S\subset\bdy M=\overline{M}\setminus M$.
\smallskip

Let $M=D$, a bounded domain in $\C^d$.
If $(D,\koba_D)$ is a complete distance space, then any 
closed subset $S$ of $D$ satisfies the first defining condition of a geodesic
subspace. 
Thus, in this case, every closed subset $S$ of $D$ that satisfies 
only the second condition in Definition~\ref{Def:geodspace} will be a geodesic subspace. For example:
every holomorphic retract of a complete distance space 
$(D,\koba_D)$ is a geodesic subspace.
Let $\OM$ and $D$ be bounded domains in $\C^m$ and
$\C^n$, respectively, such that $(\OM,\koba_\OM)$ is complete.
Let $f: \OM \lraw D$ be an isometry with
respect to the Kobayashi distances (we are not making any claims
about the existence of such isometries). If we write $S \defeq
f(\OM) \subset D$, then it is easy to see that $S$ is a geodesic 
subspace of $D$. This example suggests that there could 
be a bounded domain $D$ such that $(D,\koba_D)$ is not complete, but
such that $D$ nevertheless has geodesic subspaces. Indeed, this is 
the case. 
\begin{example}
Let $\OM \subset \C^d$ be a bounded convex domain and let $A$ be an
analytic subset of $\OM$ of co-dimension at least 2. Let $D \defeq \OM
\setminus A$. Then $\koba_D = \koba_{\OM}|_{D \times D}$ 
(see, for example, \cite[Theorem~2]{Campbell_Ogawa}). Choose a complex geodesic 
$f$ in $\OM$ that avoids $A$. Clearly, $f$ is a complex geodesic in 
$D$ too. Note that $(D, \koba_D)$ is not complete. But $f(\unitdisk)$ is a 
geodesic subspace of $D$.
\end{example}

We need a definition. Before providing it, we clarify that, if $X$ is
a given topological space, then by a compactification of $X$ we shall
mean a pair $(\iota,\widetilde{X})$, where $\widetilde{X}$ is 
required to be a compact Hausdorff topological space and $\iota : X 
\lraw \widetilde{X}$ is required to be a homeomorphism onto its image
$\iota(X)$, which is, in addition, required to be an open, dense
subset of $\widetilde{X}$. We shall regard $X$ as being a subset of
$\widetilde{X}$ (by identifying $X$ with $\iota(X)$). 
\begin{definition} \label{dfn:geod_loop}
Let $(X,d)$ be a proper, geodesic distance space and let 
$(\iota,\widetilde{X})$ be a
compactification of $X$ (regarded as a topological space with the
topology induced by $d$). By a {\em geodesic loop of} $X$ {\em in} 
$\widetilde{X}$ we mean a geodesic line $\gamma$ in $(X,d)$ (that 
is, an isometric embedding $\gamma$ from $(\R,|\cdot|)$ to $(X,d)$) such 
that the set of limit points of $\gamma$ at $\infty$ is equal to the 
set of limit points of $\gamma$ at $-\infty$. (Note that the set 
of limit points of $\gamma$ at $\infty$ (and $-\infty$) is contained 
in $\widetilde{X} \setminus X$.) 	
\end{definition}
\noindent We point out that we will only use this notion in the case
where $X=S$ is a geodesic subspace of a bounded, connected, embedded
submanifold $M$ of $\C^d$.
\smallskip

We note that it is easy to define the notion of visibility for a pair consisting 
of a proper geodesic distance space $(X,d)$ and a compactification 
$(\iota,\widetilde{X})$ of $X$, by analogy with Definition~\ref{Def:vissub}. The 
important thing for us to note is that if $X$ has the visibility property with 
respect to the compactification $\widetilde{X}$, the proof of the first part of
\cite[Lemma~3.1]{Bracci_Nikolov_Thomas}
goes through without change to show that every 
geodesic ray $\gamma$ in $(X,d)$ (i.e., an isometric embedding $\gamma:
\big([0,\infty),|\cdot|\big) \lraw (X,d)$) {\em lands} at a point of
$\widetilde{X} \setminus X$, i.e., $\lim_{t \to \infty} \gamma(t)$
exists as an element of $\widetilde{X} \setminus X$ (which is the boundary of $X$ 
in $\widetilde{X}$). We note that, in such a situation, a geodesic loop of $X$ in 
$\widetilde{X}$ is a geodesic line $\gamma$ such that 
$\lim_{t \to -\infty} \gamma(t) = \lim_{t \to \infty} \gamma(t)$.
\smallskip

We now state two lemmas, the second of which is a mild generalization of 
\cite[Lemma~3.1]{Bracci_Nikolov_Thomas} and which was referred to above. The 
utility of these lemmas will become apparent when we prove 
Theorem~\ref{th:ext_of_id_vis}. Since the proof of the second lemma is 
substantially the
same as that of \cite[Lemma~3.1]{Bracci_Nikolov_Thomas}, we omit the proof.  
The essential observation here is that the proof in \cite{Bracci_Nikolov_Thomas} 
goes through virtually without modification in the more general setting of 
visibility subspaces. 
	
\begin{lemma} \label{L:vis_dst_bdy_pts_infty}
Suppose that $M \subset \C^d$ is a bounded, connected, embedded
complex 
submanifold of $\C^d$ and that $S$ is a visibility subspace of $M$. 
If $(z_\nu)_{\nu \geqslant 1}$ and $(w_\nu)_{\nu \geqslant 1}$ are 
sequences in $S$
converging to \emph{distinct} boundary points $p,q \in \bdy_a S$, 
then $\koba_M(z_\nu,w_\nu) \to \infty$ as $\nu \to \infty$.
\end{lemma}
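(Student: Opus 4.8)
The plan is to argue by contradiction, following the standard visibility argument adapted to the setting of a visibility subspace. Suppose $(z_\nu)$ and $(w_\nu)$ converge in the Euclidean sense to distinct points $p, q \in \bdy_a S$, but that $\koba_M(z_\nu, w_\nu) \not\to \infty$. Passing to a subsequence, we may assume $\koba_M(z_\nu, w_\nu) \leq C$ for all $\nu$, for some $C > 0$. Since $S$ is a geodesic subspace, for each $\nu$ choose a $\koba_M$-geodesic segment $\gamma_\nu : [0, T_\nu] \to S$ with $\gamma_\nu(0) = z_\nu$, $\gamma_\nu(T_\nu) = w_\nu$, contained in $S$, where $T_\nu = \koba_M(z_\nu, w_\nu) \leq C$.

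Now I would invoke the visibility property of $S$: since $p \neq q$ lie in $\bdy_a S$, there exist $\C^d$-neighbourhoods $U$ of $p$ and $V$ of $q$ with $\overline U \cap \overline V = \emptyset$ and a compact set $K \subset S$ such that every $\koba_M$-geodesic in $S$ with initial point in $U \cap S$ and terminal point in $V \cap S$ meets $K$. For $\nu$ large, $z_\nu \in U \cap S$ and $w_\nu \in V \cap S$, so each $\gamma_\nu$ (reparametrised if necessary so its initial point is in $U$ and terminal point in $V$) meets $K$ at some parameter $t_\nu \in [0, T_\nu]$; write $x_\nu := \gamma_\nu(t_\nu) \in K$. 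Since $K$ is compact, after passing to a further subsequence we may assume $x_\nu \to x_\infty \in K \subset S$.

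The contradiction comes from a triangle-inequality/length estimate. Because $\gamma_\nu$ is a $\koba_M$-geodesic and $x_\nu$ lies on it, we have $\koba_M(z_\nu, x_\nu) + \koba_M(x_\nu, w_\nu) = T_\nu \leq C$; in particular both $\koba_M(z_\nu, x_\nu) \leq C$ and $\koba_M(x_\nu, w_\nu) \leq C$. On the other hand, I would use the preliminary observation already established in this section: fixing the base point $z_0 := x_\infty \in S$ and using that $(z_n)$ converges in the Euclidean sense to $p \in \bdy_a S$ forces $\koba_M(z_0, z_\nu) \to \infty$ (this is exactly the consequence of properness of $(S, \koba_M|_{S \times S})$ noted before Definition~\ref{dfn:geod_loop}). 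Since $\koba_M(x_\infty, x_\nu) \to 0$ by continuity of $\koba_M$ and $x_\nu \to x_\infty$, the triangle inequality gives $\koba_M(x_\infty, z_\nu) \leq \koba_M(x_\infty, x_\nu) + \koba_M(x_\nu, z_\nu) \leq \koba_M(x_\infty, x_\nu) + C$, which stays bounded — contradicting $\koba_M(x_\infty, z_\nu) \to \infty$. Hence $\koba_M(z_\nu, w_\nu) \to \infty$.

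The main technical point to handle carefully is the reparametrisation of $\gamma_\nu$ so that its endpoints genuinely lie in $U \cap S$ and $V \cap S$ as required by the definition of visibility subspace — one must be sure that $z_\nu$ and $w_\nu$ (not merely some interior points) are used as the endpoints, which is immediate here since $\gamma_\nu$ joins $z_\nu$ to $w_\nu$. A secondary point is the justification that $\koba_M(z_0, z_\nu) \to \infty$ for a Euclidean-convergent sequence approaching $\bdy_a S$; this was already recorded in the preliminary discussion following the definition of geodesic subspace, so I would simply cite that. No step here is genuinely hard; the argument is the textbook visibility argument, with the only novelty being bookkeeping to stay inside the subspace $S$ rather than a domain.
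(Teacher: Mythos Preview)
Your proof is correct and follows essentially the same approach as the paper's: use that $S$ is a geodesic subspace to produce geodesics $\gamma_\nu$ in $S$ joining $z_\nu$ to $w_\nu$, apply visibility to trap a point $\gamma_\nu(t_\nu)$ in a fixed compact $K\subset S$, and then invoke properness of $(S,\koba_M|_{S\times S})$ to force $\koba_M(z_\nu,\gamma_\nu(t_\nu))\to\infty$. The only cosmetic differences are that the paper argues directly (writing $\koba_M(z_\nu,w_\nu)=\koba_M(z_\nu,\gamma_\nu(t_\nu))+\koba_M(\gamma_\nu(t_\nu),w_\nu)$ and showing both summands diverge) rather than by contradiction, and it does not bother extracting a convergent subsequence $x_\nu\to x_\infty$\,---\,it simply uses that $\gamma_\nu(t_\nu)$ lies in the compact set $K$, which already suffices.
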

\begin{proof} 
Note that the proof of (1) of Proposition~\ref{P:charac_visib_Grom_prod_gen} goes 
through with almost no modifications to show that 
$\limsup_{\nu\to\infty}(z_\nu|w_\nu)_o<\infty$,
where the Gromov product is now calculated with respect to $\koba_M|_{S\times S}$ 
and for any fixed $o\in S$. 
Combining this with our previous observation that $\koba_M(x, o)\to\infty$ when 
$S\ni x\to x_0\in\bdy_a S$, the required conclusion follows immediately.
\end{proof}

\begin{lemma} \label{lm:conv_seq_geod}
Let $M$ and $S$ be as above.
Then any geodesic ray $\gamma$ in $S$ lands at a point $p$ of $\bdy_a S$, 
i.e., there exists $p \in \bdy_a S$ such that $\lim_{t \to \infty} 
\gamma(t) = p$.
	
Conversely, suppose that $z_0 \in S$ and that 
$(z_{\nu})_{\nu \geqslant 1}$ is a sequence in $S$ converging to a
point $p \in \bdy_a S$. For every $\nu$, let $\gamma_{\nu}$ be a
$\koba_M$-geodesic in $S$ joining $z_0$ to $z_{\nu}$. Then, up to a
subsequence, $(\gamma_{\nu})_{\nu \geqslant 1}$ converges uniformly
on the compact subsets of $[0, \infty)$ to a geodesic ray that lands 
at $p$. 
\end{lemma}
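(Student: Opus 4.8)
The plan is to prove the two assertions in turn, both resting on the properness of $(S,\koba_M|_{S\times S})$ and the visibility property, via a contradiction argument of the standard Eberlein--O'Neill flavour.

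For the first assertion, let $\gamma:[0,\infty)\to S$ be a geodesic ray. Because $(S,\koba_M|_{S\times S})$ is proper and $\koba_M(\gamma(0),\gamma(t))=t\to\infty$, the curve $\gamma(t)$ eventually leaves every compact subset of $S$; hence every subsequential Euclidean limit of $\gamma(t)$ as $t\to\infty$ lies in $\bdy_a S$ (recall $\bdy_a S\subset\bdy M$, and $\overline{S}$ is compact, so such limits exist). Suppose, for contradiction, that $\gamma$ does not land, i.e.\ there are sequences $s_\nu,t_\nu\to\infty$ with $\gamma(s_\nu)\to p$, $\gamma(t_\nu)\to q$ and $p\neq q$ in $\bdy_a S$. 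We may assume $s_\nu<t_\nu$ (after passing to subsequences). Apply the visibility property of $S$ to the pair $p\neq q$: there are $\C^d$-neighbourhoods $U\ni p$, $V\ni q$ and a compact $K\subset S$ with $\overline U\cap\overline V=\emptyset$ such that every $\koba_M$-geodesic in $S$ with initial point in $U\cap S$ and terminal point in $V\cap S$ meets $K$. For $\nu$ large, $\gamma|_{[s_\nu,t_\nu]}$ is such a geodesic (it is a $\koba_M$-geodesic segment contained in $S$, with $\gamma(s_\nu)\in U\cap S$, $\gamma(t_\nu)\in V\cap S$), so it meets $K$: pick $\tau_\nu\in[s_\nu,t_\nu]$ with $\gamma(\tau_\nu)\in K$. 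But then $t_\nu-\tau_\nu=\koba_M(\gamma(\tau_\nu),\gamma(t_\nu))\to\infty$ would force $\gamma(t_\nu)$ to escape the $\koba_M$-ball of any fixed radius about points of the compact $K$ — wait, that is automatic; the real point is that $\koba_M(\gamma(0),\gamma(\tau_\nu))=\tau_\nu$ stays bounded below by $s_\nu\to\infty$, so $\gamma(\tau_\nu)$ escapes every compact subset of $S$, contradicting $\gamma(\tau_\nu)\in K$. This contradiction shows $\gamma$ lands at a single point $p\in\bdy_a S$.

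For the converse, fix $z_0\in S$, a sequence $z_\nu\to p\in\bdy_a S$, and $\koba_M$-geodesics $\gamma_\nu:[0,L_\nu]\to S$ with $\gamma_\nu(0)=z_0$, $\gamma_\nu(L_\nu)=z_\nu$ (these exist since $S$ is a geodesic subspace and $\koba_M(z_0,z_\nu)=L_\nu\to\infty$ by properness). Each $\gamma_\nu$ is $1$-Lipschitz from $([0,L_\nu],|\cdot|)$ into $(S,\koba_M)$, hence — using Remark~\ref{Rm:analogKoba}, i.e.\ that $\koba_M$ dominates a multiple of the Euclidean distance on $M$, and that $\gamma_\nu$ stays in the relatively compact set $S$ where on compact subsets $\koba_M$ is also comparable to the Euclidean distance — the family $\{\gamma_\nu\}$ is equLipschitz for the Euclidean metric on each fixed $[0,T]$ once we know the curves stay in a fixed compact subset of $S$ for $t\in[0,T]$; that last point follows because $\koba_M(z_0,\gamma_\nu(t))=t\le T$ and $\koba_M$-balls about $z_0$ are compact in $S$. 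By Arzelà--Ascoli and a diagonal argument over $T=1,2,\dots$, a subsequence of $(\gamma_\nu)$ converges uniformly on compact subsets of $[0,\infty)$ to a curve $\gamma:[0,\infty)\to S$ (the limit lies in $S$ since on $[0,T]$ the curves lie in a fixed compact subset of $S$). Continuity of $\koba_M$ gives $\koba_M(\gamma(s),\gamma(t))=|s-t|$ for all $s,t$, so $\gamma$ is a geodesic ray in $S$; by the first part it lands at some $p'\in\bdy_a S$. It remains to check $p'=p$: if not, apply visibility to the pair $p'\neq p$ with neighbourhoods $U'\ni p'$, $V\ni p$ and compact $K\subset S$; for large $t$ and large $\nu$ (along the subsequence) $\gamma_\nu(t)\in U'\cap S$ (since $\gamma_\nu(t)\to\gamma(t)$ and $\gamma(t)\to p'$) while $\gamma_\nu(L_\nu)=z_\nu\in V\cap S$, so $\gamma_\nu|_{[t,L_\nu]}$ meets $K$ at some parameter $\ge t\to\infty$, again contradicting properness (its $\koba_M$-distance from $z_0$ is that parameter, which is unbounded, yet it lies in the compact $K$). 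Hence $p'=p$, completing the proof.

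The main obstacle is the converse direction's equicontinuity step: one must verify that the geodesics $\gamma_\nu$, restricted to a fixed parameter interval $[0,T]$, are not merely $\koba_M$-equicontinuous but remain in one fixed compact subset of $S$ and are therefore Euclidean-equicontinuous, so that Arzelà--Ascoli applies in $\C^d$ and the limit curve genuinely lands in $S$ rather than drifting to $\bdy_a S$ prematurely. This is where properness of $(S,\koba_M|_{S\times S})$ — established in the paragraph after Definition~\ref{Def:geodspace} via \cite[Theorem~8.1]{Kobayashi} — together with Part~(2) of Proposition~\ref{P:PreliForExistenceLamdaKappa} (comparability of $\koba_M$ with the Euclidean metric on compacta) does the essential work; the visibility property is then only invoked to pin down the landing points.
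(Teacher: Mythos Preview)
Your proof is correct and follows essentially the same route as the paper's: in both parts one combines the visibility property with properness of $(S,\koba_M|_{S\times S})$ to obtain a point $\gamma(\tau_\nu)$ (respectively $\gamma_\nu(\tau_\nu)$) lying in a fixed compact $K\subset S$ while the parameter $\tau_\nu$ is forced to infinity, yielding the contradiction. You are more explicit than the paper about why Arzel\`a--Ascoli applies in the converse direction (the paper simply asserts it), but the structure of the two arguments is otherwise identical.
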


\subsection{The proofs of Theorem~\ref{th:ext_of_id_vis} and Theorem~\ref{th:cont_ext_koba_isom}}\label{SS:ext_of_id_vis}

With these two lemmas in place, it is easy to see that one can, without any 
difficulty, replicate the arguments in the proof of 
\cite[Theorem~3.3]{Bracci_Nikolov_Thomas} to prove 
Theorem~\ref{th:ext_of_id_vis}.
We therefore omit the proof of the latter.
\smallskip

We now illustrate the usefulness of Theorem~\ref{th:ext_of_id_vis} by proving 
Theorem~\ref{th:cont_ext_koba_isom}.

\begin{proof}[The proof of Theorem~\ref{th:cont_ext_koba_isom}]
Note that $f$ is an isometry between $(M,\koba_M)$ and
$(S,\koba_N|_{S \times S})$. Therefore, $(S,\koba_N|_{S \times S})$ is
Gromov hyperbolic (because $(M,\koba_M)$ is by assumption
so). By the general theory of Gromov hyperbolic spaces
(see \cite[Part~III, Chapter~H, Theorem~3.9]{Bridson_Haefliger}), $f$ extends to  
a homeomorphism $\widetilde{f}$ from $\overline{M}^G$ to  
$\overline{S}^G$. By Theorem~\ref{th:ext_of_id_vis}, $\mathsf{id}_S : 
S \lraw S$ extends to a continuous surjection $\widehat{\mathsf{id}}_S : 
\overline{S}^G \lraw \overline{S}$. There is also a natural inclusion 
$i_{\overline{S}}$ of $\overline{S}$ in $\overline{N}$. If we define
$\widehat{f} \defeq i_{\overline{S}} \circ \widehat{\mathsf{id}}_S 
\circ \widetilde{f}$,
then it is clear that $\widehat{f}:\overline{M}^G \lraw \overline{N}$ 
is 
a continuous extension of $f$. If $S$ has no geodesic loops in
$\overline{S}$ then, again by Theorem~\ref{th:ext_of_id_vis}, 
$\widehat{\mathsf{id}}_S$ is a homeomorphism from $\overline{S}^G$ to 
$\overline{S}$ and it follows from the definition of $\widehat{f}$ 
that, regarding it as a mapping from $\overline{M}^G$ to 
$\overline{S}$, it is a homeomorphism.
\end{proof}

Now, we shall present two important corollaries of Theorem~\ref{th:cont_ext_koba_isom}. 

\begin{corollary}\label{C:LamdaKappaVisbilityExtesion}
	Suppose that $M \subset \C^m$ and $N \subset \C^n$ are bounded,
	connected, embedded complex submanifolds and that $M$ is complete 
	with respect to its Kobayashi distance. Suppose that $f:M \longrightarrow N$ 
	is an isometry with respect to the
	Kobayashi distances. Suppose that $(M,\koba_M)$ is Gromov
	hyperbolic and that $N$ is a weak visibility submanifold. 
	Then $f$ extends to a continuous map $\widehat{f} : \overline{M}^G 
	\longrightarrow \overline{N}$, where $\overline{M}^G$ denotes the 
	Gromov compactification of $(M,\koba_M)$.
\end{corollary}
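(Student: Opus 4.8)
The plan is to deduce the corollary from Theorem~\ref{th:cont_ext_koba_isom}; all that needs checking is that $S\defeq f(M)$ is a visibility subspace of $N$ under either of the two hypotheses on $N$. First I would recall why $S$ is a geodesic subspace of $N$ --- this is the parenthetical assertion in the statement of Theorem~\ref{th:cont_ext_koba_isom}. Since $f$ is a Kobayashi isometry and $(M,\koba_M)$ is complete, $(S,\koba_N|_{S\times S})$ is isometric to $(M,\koba_M)$ and hence Cauchy complete; and given distinct points $f(z),f(w)\in S$, one joins $z$ and $w$ in $M$ by a real $\koba_M$-geodesic (which exists because $M$ is complete) and pushes it forward by $f$ to obtain a real $\koba_N$-geodesic through $f(z),f(w)$ whose image lies in $S$. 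Consequently, by the general observations about geodesic subspaces made in Section~\ref{S:Ext_Kob_Iso} (applied with $N$ in place of $M$), $S$ is closed in $N$ and $\bdy_a S\subset\bdy N$.

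The other ingredient is the observation that every real Kobayashi geodesic $\sigma:I\to N$ is, in particular, a $(\lambda,\kappa)$-almost-geodesic for every $\lambda\geq 1$ and every $\kappa>0$. Indeed, condition~(1) in Definition~\ref{Def:lk_geodesic} is immediate from $\koba_N(\sigma(s),\sigma(t))=|s-t|$ and $\lambda\geq 1$; and condition~(2) holds because, by part~(1) of Proposition~\ref{P:PreliForExistenceLamdaKappa}, such a $\sigma$ is Lipschitz with respect to the Euclidean distance --- hence absolutely continuous --- while, $\sigma$ being parametrized by arc length, the characterization of $\koba_N$ as the integrated form of $\kappa_N$ (Result~\ref{Res:C1GeodesicInf}) forces $l_N(\sigma|_{[s,t]})=|s-t|$ for all $s\leq t$ in $I$, so that $\kappa_N(\sigma(t),\sigma'(t))=1\leq\lambda$ for a.e.\ $t$. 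I expect this last point --- that an arc-length-parametrized real Kobayashi geodesic has unit Kobayashi--Royden speed almost everywhere --- to be the only step of the argument requiring a genuine (if routine and essentially well known) verification rather than bookkeeping: the inequality $l_N(\sigma|_{[s,t]})\geq|s-t|$ is immediate from Result~\ref{Res:C1GeodesicInf}, while the reverse inequality comes from approximating $\sigma|_{[s,t]}$ uniformly by concatenations of curves that nearly realize the Kobayashi distances between consecutive points of ever finer partitions of $[s,t]$, together with lower semicontinuity of $l_N$.

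Granting these two ingredients, checking that $S$ is a visibility subspace of $N$ is straightforward. Fix distinct $p,q\in\bdy_a S$; by the first ingredient $p,q\in\bdy N$. If $N$ is a BNT-visibility submanifold, apply its visibility property to the pair $p\neq q$ to obtain $\C^n$-neighbourhoods $V$ of $p$ and $W$ of $q$ with $\overline V\cap\overline W=\emptyset$ and a compact $K'\subset N$ such that every real Kobayashi geodesic of $N$ with initial point in $V$ and terminal point in $W$ meets $K'$; if instead $N$ is a $(\lambda,\kappa)$-visibility submanifold, do likewise to obtain $V,W,K'$ with the corresponding property for $(\lambda,\kappa)$-almost-geodesics. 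In either case, let $\gamma$ be any $\koba_N$-geodesic contained in $S$ with initial point in $V\cap S$ and terminal point in $W\cap S$. Then $\gamma$ is a real Kobayashi geodesic of $N$ --- and, by the second ingredient, a $(\lambda,\kappa)$-almost-geodesic of $N$ --- with initial point in $V$ and terminal point in $W$, hence $\mathsf{ran}(\gamma)\cap K'\neq\emptyset$; since $\mathsf{ran}(\gamma)\subset S$ and $S$ is closed in $N$, $\gamma$ meets the compact subset $K\defeq K'\cap S$ of $S$. Thus $\{p,q\}$ satisfies the condition in Definition~\ref{Def:vissub}, with the neighbourhoods $V,W$ and the compact set $K$; as $p\neq q\in\bdy_a S$ were arbitrary, $S$ is a visibility subspace of $N$.

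It now suffices to apply Theorem~\ref{th:cont_ext_koba_isom}: $(M,\koba_M)$ is complete and Gromov hyperbolic, $f:M\to N$ is a Kobayashi isometry, and $S=f(M)$ is a visibility subspace of $N$, so $f$ extends to a continuous map $\widehat f:\overline{M}^G\to\overline{N}$, as required.
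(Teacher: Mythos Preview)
Your treatment of the BNT-visibility case is correct and matches the paper. In the $(\lambda,\kappa)$-visibility case you take a more direct route than the paper: you argue that every real Kobayashi geodesic $\sigma$ in $N$ is a $(\lambda,\kappa)$-almost-geodesic in the sense of Definition~\ref{Def:lk_geodesic}, so that the visibility hypothesis on $N$ applies verbatim to the $\koba_N$-geodesics lying in $S$. The delicate point is condition~(2), namely $\dkoba_N(\sigma(t),\sigma'(t))\leq\lambda$ a.e.; your route is the identity $l_N(\sigma|_{[s,t]})=|s-t|$, whose nontrivial half ($\leq$) you justify by ``lower semicontinuity of $l_N$'' under uniform approximation. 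This is where the argument breaks: $\dkoba_N$ is only \emph{upper} semicontinuous, so there is no lower-semicontinuity of $l_N$ to invoke, and your approximating concatenations carry no control on derivatives. In fact, upper semicontinuity yields only the \emph{opposite} pointwise inequality $\dkoba_N(\sigma(t),\sigma'(t))\geq 1$ a.e.\ (via $\limsup_{h\to 0}|h|^{-1}\koba_N(z,z+hv)\leq\dkoba_N(z,v)$ applied along $\sigma$). Nothing rules out $l_N(\sigma)>\koba_N(\sigma(a),\sigma(b))$ for a metric geodesic: the infinitesimal derivative of $\koba_N$ is the Kobayashi--Busemann pseudometric, which in general is only $\leq\dkoba_N$. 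So condition~(2) for a real geodesic is neither ``routine'' nor, with the tools at hand, established.

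The paper sidesteps this entirely. It never claims real geodesics satisfy condition~(2); instead, for the $(\lambda,\kappa)$-visibility case, it mimics $(3)\Rightarrow(2)$ of Proposition~\ref{P:charac_visib_Grom_prod_gen}\,---\,joining pairs of points of $S$ by $(1,\kappa)$-almost-geodesics in $N$ (which exist by Theorem~\ref{T:exist_1kappa} and are automatically $(\lambda,\kappa)$-almost-geodesics since $\lambda\geq 1$)\,---\,to bound the Gromov product near every pair $p\neq q\in\bdy_a S$; then it mimics $(1)\Rightarrow(3)$ of the same proposition to obtain visibility of $S$ with respect to real geodesics. That last step uses only the quasi-geodesic inequality (condition~(1) of Definition~\ref{Def:lk_geodesic}) for the curves in question, as the paper points out explicitly in the parenthetical at the end of the proof of Corollary~\ref{C:BNTandkappaposit}. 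Your argument can be repaired by following the same detour through the Gromov product.
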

\begin{proof}
	Since $N$ is a weak visibility submanifold, (1) of 
	Proposition~\ref{P:charac_visib_Grom_prod_gen} gives us: for every $p,q 
	\in \bdy N$, $p \neq q$, $\limsup_{(x,y)\to (p,q)} (x|y)_o < \infty$. In 
	particular,
	\begin{equation*}
		\forall \, p,q \in \bdy_a S \text{ with } p \neq q, \; 
		\limsup_{S \times S \ni (x,y) \to (p,q)} (x|y)_o < \infty,
	\end{equation*}
	where we take $S\defeq f(M)$, and $o$ to be an arbitrary but fixed point of 
	$S$. Now the reader can easily verify that precisely the same method that is 
	used to prove (2) of Proposition~\ref{P:charac_visib_Grom_prod_gen} can also 
	be used, in this case, to show that $S$ is a visibility subspace of $N$ (keep 
	in mind that $(S,\koba_N|_{S \times S})$ is a {\em proper} distance space). 
	So we may once again apply Theorem~\ref{th:cont_ext_koba_isom} to 
	draw the desired conclusion.  
\end{proof}
In particular, when $ M = \mathbb{D} $, which is a complete, Gromov 
hyperbolic distance space with respect to the Kobayashi distance $ 
\koba_{\mathbb{D}} $ and for which the Gromov compactification is 
known to coincide with the Euclidean compactification, we have:
\begin{corollary}\label{C:ComplexGeodesicExtension}
	Suppose that $M \subset \C^m$ is a weak visibility submanifold.
	Suppose that $f:\unitdisk \lraw M$ is a complex geodesic. Then $f$
	extends to a continuous map $\widehat{f} : \overline{\unitdisk} 
	\lraw \overline{M}$.
\end{corollary}

We are now ready to present the proof of 
Corollary~\ref{Cor:ext_cg_dinismooth_strcvx}.

\begin{proof}[The proof of Corollary~\ref{Cor:ext_cg_dinismooth_strcvx}] We first 
note that, by the arguments that occur in the first part of the proof of 
Corollary~\ref{C:finitetypeexceptsmallset}, we can show that the hypotheses 
of Corollary~\ref{cor:C-str-cvx_Dini_imp_visib} in Section~\ref{S:local_cons_EVL} 
are satisfied by $\OM$. Consequently, by 
Corollary~\ref{cor:C-str-cvx_Dini_imp_visib}, $\OM$ is a weak visibility domain. 
Now we can invoke Corollary~\ref{C:ComplexGeodesicExtension} to obtain the desired 
conclusion.
\end{proof}

\section{A Wolff--Denjoy-type theorem} \label{S:WD}
In this section we present a proof of Theorem~\ref{th:WD_submani}.
Our proof relies on two crucial ingredients that are consequences of visibility with respect to $(1,\kappa)$-almost-geodesics for
some $\kappa>0$. In the first subsection below, we present these ingredients first.
	
\subsection{Preparations}\label{SS:prepwd}	
Our first ingredient is an analogue of Proposition~4.1 in 
\cite{Bharali_Maitra}. Its proof 
is based on an argument developed by Karlsson in \cite{Karlsson}.

\begin{proposition} \label{prp:tech_lem_vis_man}
Let $M$ be a bounded, connected, embedded complex
submanifold of $\C^d$. Suppose there exists $\kappa_0> 0$ such that $M$ 
possesses the visibility property
with respect to $(1, \kappa_0)$-almost-geodesics. 
Let $\nu,\mu : \posint \lraw \posint$ be strictly increasing 
functions such that there
exists $m_0 \in M$ so that
\begin{equation} \label{eq:two_seqs_conv_infty}
\lim_{j \to \infty} \koba_M\big(F^{\nu(j)}(m_0),m_0\big) = 
\lim_{j \to \infty} \koba_M\big(F^{\mu(j)}(m_0),m_0\big) = \infty.
\end{equation}
Then there exists $\xi \in \bdy M$ such that, for all $z\in M$, $\lim_{j \to 
\infty} F^{\nu(j)}(z) = \lim_{j \to \infty} F^{\mu(j)}(z) = \xi$.
\end{proposition}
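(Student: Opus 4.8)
The statement is a "double-sequence" version of the usual attractor-point lemma for visibility spaces, and the strategy is the familiar one for such results: first produce a candidate point $\xi$, then upgrade pointwise convergence at a single base point to uniform convergence on compacta, and finally show that the two sequences $(F^{\nu(j)})$ and $(F^{\mu(j)})$ converge to the \emph{same} $\xi$. The main tool is visibility with respect to $(1,\kappa_0)$-almost-geodesics, together with Theorem~\ref{T:exist_1kappa} (existence of $(1,\kappa)$-almost-geodesics joining any two points) and Proposition~\ref{P:PreliForExistenceLamdaKappa}/Remark~\ref{Rm:analogKoba} (the lower Euclidean bound $c\|z-w\| \le \koba_M(z,w)$, which lets us pass from $\koba_M$-divergence to convergence in $\overline M$).

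First I would fix the base point $m_0$ and set $a_j \defeq F^{\nu(j)}(m_0)$, $b_j \defeq F^{\mu(j)}(m_0)$. Since $\koba_M(a_j,m_0) \to \infty$ and $M$ is bounded with $\bdy M$ compact, after passing to a subsequence I may assume $a_j \to p \in \bdy M$ and $b_j \to q \in \bdy M$ (note $\koba_M(a_j,m_0)\to\infty$ together with the lower Euclidean bound forces the Euclidean limit to lie in $\bdy M$). The first key claim is $p = q$. Suppose not. For each $j$, join $a_j$ to $b_j$ by a $(1,\kappa_0)$-almost-geodesic $\gamma_j$ (Theorem~\ref{T:exist_1kappa}); by visibility there is a compact $K\subset M$ and times $t_j$ with $\gamma_j(t_j)\in K$, so $\koba_M(a_j,\gamma_j(t_j))$ and $\koba_M(\gamma_j(t_j),b_j)$ are each bounded below by (roughly) $\koba_M(a_j,m_0) - \mathrm{diam}_{\koba_M}(K\cup\{m_0\})$, hence both $\to\infty$. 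But also, since $F$ is $\koba_M$-nonincreasing, $\koba_M(a_j,b_j)=\koba_M(F^{\nu(j)}(m_0),F^{\mu(j)}(m_0))$; writing $\nu(j),\mu(j)$ with (say) $\nu(j)\le\mu(j)$ along a further subsequence, $\koba_M(a_j,b_j)\le \koba_M(m_0, F^{\mu(j)-\nu(j)}(m_0))$, which need \emph{not} be bounded — so that direct route fails, and this is where the real work is. The correct move is the standard trick: use the almost-geodesic $\gamma_j$ and the point $\gamma_j(t_j)\in K$ to write, via the "almost triangle inequality" for $(1,\kappa_0)$-almost-geodesics, $\koba_M(a_j,b_j) \ge \koba_M(a_j,\gamma_j(t_j)) + \koba_M(\gamma_j(t_j),b_j) - \kappa_0'$ for a constant depending only on $\kappa_0$; but I also want an \emph{upper} estimate that contradicts divergence. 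In fact the clean contradiction is obtained by applying visibility not to $(a_j,b_j)$ but to a cleverly chosen pair so that one gets $\koba_M(a_j,\gamma_j(t_j))$ bounded — exactly as in the $(3)\implies(1)$ or Lemma~\ref{L:vis_dst_bdy_pts_infty} arguments. I expect that Proposition~4.1 of \cite{Bharali_Maitra} is proved by: taking a $(1,\kappa_0)$-almost-geodesic $\gamma_j$ from $a_j$ to $b_j$, noting $\mathrm{ran}(\gamma_j)\cap K\ne\emptyset$ by visibility, then observing $\koba_M(m_0,\gamma_j(t_j)) \le \koba_M(m_0,a_j) + \koba_M(a_j,\gamma_j(t_j)) $ is \emph{not} what one wants — rather one uses the monotonicity of $F$ to realize $\gamma_j(t_j)$ is "between" iterates and derive $\koba_M(m_0,\gamma_j(t_j)) \le \koba_M(m_0, F^{k_j}(m_0))$ for some intermediate $k_j$... which again is not bounded. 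So the genuine argument must be: the hypothesis \eqref{eq:two_seqs_conv_infty} on \emph{both} sequences is used symmetrically, together with visibility applied to the \emph{single} sequence $F^{\nu(j)}(z)$ for varying $z$, to first show each sequence individually lands at a point, and then a separate argument (again visibility) shows these points coincide.

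So, concretely, the step order I would follow is: (i) Show that for a \emph{fixed} strictly increasing $n(j)$ with $\koba_M(F^{n(j)}(m_0),m_0)\to\infty$, the sequence $F^{n(j)}(m_0)$ converges (along the full sequence, not just a subsequence) to some $\xi_n\in\bdy M$ — this is where visibility enters: if two subsequences converged to distinct $p\ne q$, pick $(1,\kappa_0)$-almost-geodesics joining $F^{n(j_k)}(m_0)$ to $F^{n(j'_k)}(m_0)$ where $j_k,j'_k$ are interleaved, get a compact trap $K$, and derive that $F^{n(\cdot)}(m_0)$ clusters in $K$, contradicting $\koba_M(\cdot,m_0)\to\infty$; the interleaving and the almost-triangle inequality are the technical heart. (ii) Upgrade from $m_0$ to an arbitrary $z\in M$: since $\koba_M(F^{n(j)}(z),m_0) \ge \koba_M(F^{n(j)}(m_0),m_0) - \koba_M(z,m_0) \to\infty$ by $\koba_M$-monotonicity of $F$, step (i) applies to $z$ too, giving $F^{n(j)}(z)\to\xi_n(z)$; and then another almost-geodesic-visibility argument comparing $F^{n(j)}(m_0)$ and $F^{n(j)}(z)$ (whose $\koba_M$-distance is bounded by $\koba_M(m_0,z)$, hence \emph{does not} diverge) forces $\xi_n(z)=\xi_n(m_0)=:\xi_n$ — here I use the contrapositive of Lemma~\ref{L:vis_dst_bdy_pts_infty}: if the two sequences converged to distinct boundary points, $\koba_M$-distances would diverge, contradiction. (iii) Uniform convergence on compacta: a normal-families / Montel argument on the bounded domain (or the fact that bounded submanifolds are taut is not assumed here, so instead use that $\{F^{n(j)}\}$ are uniformly Lipschitz in the Euclidean metric on $K$-sublevels — actually, since $\bdy M$ is compact and $F^{n(j)}(z)\to\xi_n$ for each $z$ with the limit independent of $z$, and the maps are equicontinuous on compacta by the Euclidean-Kobayashi comparison and distance-decreasing property, Arzelà–Ascoli gives uniform convergence on compacta to the constant $\xi_n$). (iv) Finally apply (i)–(iii) to $n=\nu$ and $n=\mu$ to get $F^{\nu(j)}(z)\to\xi_\nu$ and $F^{\mu(j)}(z)\to\xi_\mu$ uniformly on compacta, and then run the Lemma~\ref{L:vis_dst_bdy_pts_infty}-contrapositive argument one last time on the pair of sequences $\big(F^{\nu(j)}(m_0)\big)$, $\big(F^{\mu(j)}(m_0)\big)$, whose mutual $\koba_M$-distance is $\le \koba_M\big(m_0, F^{|\mu(j)-\nu(j)|}(m_0)\big)$ — \textbf{this is the step I expect to be the main obstacle}, because that bound need not be finite, so one cannot directly conclude $\xi_\nu=\xi_\mu$ from Lemma~\ref{L:vis_dst_bdy_pts_infty}; instead one must exploit the structure $F^{\mu(j)} = F^{\mu(j)-\nu(j)}\circ F^{\nu(j)}$ and the already-established convergence $F^{\nu(j)}(m_0)\to\xi_\nu$, pushing the large iterate $F^{\mu(j)-\nu(j)}$ onto a point near $\xi_\nu$ and using continuity/visibility near the boundary — i.e. one shows $F^{\mu(j)}(m_0)=F^{\mu(j)-\nu(j)}\big(F^{\nu(j)}(m_0)\big)$ stays close, for large $j$, to $F^{\mu(j)-\nu(j)}(w)$ for $w$ fixed near $\xi_\nu$, and by the uniform-on-compacta convergence of the $\mu$-block this limit is $\xi_\mu$; combined with the fact that if $\xi_\nu\ne\xi_\mu$ we could build an almost-geodesic between far-apart iterates that escapes every compact set (contradicting visibility), we conclude $\xi_\nu=\xi_\mu=:\xi$.

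I would keep the write-up modular: prove a standalone sublemma "if $\koba_M(F^{n(j)}(m_0),m_0)\to\infty$ along a strictly increasing $n(j)$ then $F^{n(j)}$ converges uniformly on compacta to a constant in $\bdy M$" (steps (i)–(iii)), then devote the remaining paragraph to the coincidence $\xi_\nu=\xi_\mu$ (step (iv)), which is really the only place where the hypothesis that \emph{both} subsequences diverge is essential and where visibility must be invoked in its full strength on a genuinely new pair of sequences.
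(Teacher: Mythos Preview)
Your plan has a genuine gap, and it is precisely the one you sensed in step~(iv). The fix you are missing is the \emph{record-setting subsequence} trick, and without it neither step~(i) nor step~(iv) goes through.

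In step~(i) you say: join $F^{n(j_k)}(m_0)$ to $F^{n(j'_k)}(m_0)$ by a $(1,\kappa_0)$-almost-geodesic, use visibility to trap it in a compact $K$, and ``derive that $F^{n(\cdot)}(m_0)$ clusters in $K$''. But the trapped point lies on the almost-geodesic, not on the orbit; nothing forces any iterate $F^k(m_0)$ into $K$. What visibility actually gives you (via the almost-additivity inequality, Lemma~2.9 in \cite{Bharali_Maitra}) is a \emph{lower} bound
\[
\koba_M\big(F^{n(j_k)}(m_0),F^{n(j'_k)}(m_0)\big)\ \ge\ \koba_M\big(F^{n(j_k)}(m_0),m_0\big)+\koba_M\big(F^{n(j'_k)}(m_0),m_0\big)-C,
\]
and to get a contradiction you need a matching \emph{upper} bound. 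The distance-decreasing property only yields $\koba_M\big(F^{n(j_k)}(m_0),F^{n(j'_k)}(m_0)\big)\le \koba_M\big(F^{|n(j_k)-n(j'_k)|}(m_0),m_0\big)$, and as you observed in step~(iv), this quantity is not controlled in general.

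The paper resolves both (i) and (iv) at once by first passing to a subsequence $(\nu\circ\tau)(j)$ along which $\koba_M\big(F^{(\nu\circ\tau)(j)}(m_0),m_0\big)$ is a running maximum: $\koba_M\big(F^{(\nu\circ\tau)(j)}(m_0),m_0\big)\ge\koba_M\big(F^k(m_0),m_0\big)$ for every $k\le(\nu\circ\tau)(j)$. Call its Euclidean limit $\xi$. Now a single Claim does everything: if $(m_j)$ carries this record-setting property, $(m'_j)$ is any other diverging subsequence with $m'_j\le m_j$, and $F^{m_j}(z_0)\to\zeta$, $F^{m'_j}(z'_0)\to\zeta'$, then $\zeta=\zeta'$. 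The point is that the record-setting property supplies exactly the missing upper bound,
\[
\koba_M\big(F^{m_j}(z_0),F^{m'_j}(z'_0)\big)\le\koba_M\big(F^{m_j-m'_j}(z_0),z_0\big)+\koba_M(z_0,z'_0)\le\koba_M\big(F^{m_j}(z_0),z_0\big)+\koba_M(z_0,z'_0),
\]
and comparing with the visibility lower bound forces $\koba_M\big(F^{m'_j}(z'_0),z'_0\big)$ to stay bounded, contradicting divergence. This Claim, applied with $m_j=(\nu\circ\tau\circ\widetilde\tau)(j)$ against an arbitrary subsequential limit of $F^{\nu(j)}(z)$ or of $F^{\mu(j)}(z)$ (choosing $\widetilde\tau$ so that $m_j\ge m'_j$), shows every such limit equals $\xi$. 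Your step~(ii) is fine; step~(iii) is not needed since the statement only asserts pointwise convergence.
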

	
\begin{proof}
The proof of \cite[Proposition~4.1]{Bharali_Maitra} goes through without
modification. The only observation to be made is that the argument given there
works for weak visibility submanifolds, not just visibility domains as considered
in \cite{Bharali_Maitra}, when one takes into account the essential lemmas
regarding the Kobayashi distance and metric on submanifolds presented in 
Section~\ref{S:prelims} of this paper.
\end{proof}

Our second ingredient is Theorem~\ref{th:bdy-val_lims_cons} below, 
which is a consequence of Theorem~\ref{th:visib_taut_KobMetGr_0}.
The latter theorem says that, when $M$ is taut, visibility 
with respect to $(1,\kappa)$-almost-geodesics for some $\kappa>0$ 
implies that $\kobMetGrM_{M}(r) \to 0 \,\,\, \text{as}\,\,\, r \to 0$. 
(We recall that the notation $\kobMetGrM_{M}(r)$ is explained right after 
\eqref{E:kobMetGr}.)   
To prove Theorem~\ref{th:visib_taut_KobMetGr_0}, we need two lemmas. Both 
of them are elementary; so we state them here without proof.
\begin{lemma} \label{lm:seq_holo_uni_Lip}
	Suppose that $(\phi_{\nu})_{\nu \geq 1}$ is a sequence of
	holomorphic maps from $\unitdisk$ to $\C^d$. Suppose that $(\phi_{\nu})_{\nu \geq 1}$ is uniformly
	bounded. Then, for every $r_0 \in (0,1)$, there exists $L=L(r_0)<
	\infty$ such that
	\begin{equation*}
	\forall \, \nu \in \posint, \, \forall \zeta_1,\zeta_2 \in D(0,r_0),
	\, \|\phi_{\nu}(\zeta_1)-\phi_{\nu}(\zeta_2)\| \leq 
	L|\zeta_1-\zeta_2|,
	\end{equation*}
	where $ D(0,r_0) \defeq \{\zeta\in\C:\,|\zeta|<r_0\,\}$.
\end{lemma}
\begin{lemma} \label{lm:seq_holo_well-sep}
	Suppose that $(\phi_{\nu})_{\nu \geq 1}$ is a sequence of
	holomorphic maps from $\unitdisk$ to $\C^d$. Suppose that $(\phi_{\nu})_{\nu \geq 1}$ is uniformly
	bounded and that there exists $\epsilon_0>0$ such that, for all $\nu
	\in \posint$, $\|\phi'_{\nu}(0)\|\geq\epsilon_0$. Then there exists
	$\delta > 0$ such that
	\begin{equation*}
	\forall \, \nu \in \posint, \, \forall \, \zeta_1,\zeta_2 \in
	\clos{D(0,\delta)}, \,
	\|\phi_{\nu}(\zeta_1)-\phi_{\nu}(\zeta_2)\|\geq
	(\epsilon_0/2)|\zeta_1-\zeta_2|,
	\end{equation*} 
	where $ D(0,\delta) \defeq \{\zeta\in\C:\,|\zeta|<\delta\,\}$.
\end{lemma}
\begin{theorem} \label{th:visib_taut_KobMetGr_0}
	Let $M$ be a bounded, connected, embedded complex
	submanifold of $\C^d$. Suppose that $M$ has the visibility property with 
	respect to $(1,\kappa)$-almost-geodesics for some $\kappa>0$ and,
	moreover, that $M$ is taut. Then $\kobMetGrM_{M}(r) \to 0$ as $r \to 0$. 
\end{theorem}

\begin{proof}
	We will
	closely follow the proof of Theorem~4.2 in \cite{Bharali_Maitra}. However, we
	provide a complete proof here because there is an 
	essential difference between the proof of \cite[Theorem~4.2]{Bharali_Maitra}
	and the current proof, which is that the former {\em does not} work if it is
	only known that $M$ possesses the visibility property with respect to
	$(1,\kappa)$-almost-geodesics for {\em some} $\kappa>0$.
	\smallskip
	
	Assume,
	to get a contradiction, that $\kobMetGrM_M(r) \not \to 0$ as $r\to 0$.
	Since $\kobMetGrM_M(r)$ decreases as $r$ decreases to $0$, the above
	assumption implies that there exists $\epsilon_1>0$ such that
	$\kobMetGrM_M(r) \downarrow \epsilon_1$ as $r \downarrow 0$. Let
	$\epsilon_0 \defeq \epsilon_1/2$. Then, for every $\nu \in
	\posint$, $\kobMetGrM_M(1/\nu) > \epsilon_0$. Therefore, for every 
	$\nu \in \posint$, there exist $z_{\nu} \in M$ such that 
	$\dtb{M}(z_{\nu}) \leqslant 1/\nu$ and $v_{\nu} \in T_{z_{\nu}}^{(1,0)}M$ with
	$\|v_{\nu}\|=1$ such that $1/\dkoba_M(z_{\nu},v_{\nu})>\epsilon_0$, i.e.,
	$\dkoba_M(z_{\nu},v_{\nu}) < 1/\epsilon_0$.
    We also assume, without loss of generality, that 
    $(z_{\nu})_{\nu \geq 1}$ converges to some point $\xi \in \bdy M$.
	By the definition of $\dkoba_M$, the inequalities above imply that 
	there exist a holomorphic map $\phi_{\nu}:\unitdisk \lraw M$ such 
	that
	$\phi_{\nu}(0)=z_{\nu}$ and a $t_{\nu} \in (0,1/\epsilon_0)$ such that
	$t_{\nu}\phi'_{\nu}(0)=v_{\nu}$. This last equation implies:
	$t_{\nu}\|\phi'_{\nu}(0)\|=1$, which in turn implies that, for all $\nu\in
	\posint$, $\|\phi'_{\nu}(0)\|>\epsilon_0$. 
	By the tautness of $M$, there exists a subsequence of 
	$(\phi_{\nu})_{\nu\geq 1}$, which we continue to denote by $(\phi_{\nu})_{\nu\geq 1}$ without changing 
	subscripts, that converges uniformly on the compact
	subsets of $\unitdisk$ to a holomorphic map $\phi$ that is either 
	$M$-valued or $\bdy M$-valued. Note that
	\begin{equation*}
	\phi(0)=\lim_{\nu \to \infty}\phi_{\nu}(0) 
	=\lim_{\nu \to \infty}z_{\nu}=\xi.
	\end{equation*}
	Therefore, $\phi$ is $\bdy M$-valued. Note that
	$\|\phi'(0)\|\geq\epsilon_0$; so $\phi$ is non-constant.
	\smallskip

	Now we invoke
	Lemma~\ref{lm:seq_holo_well-sep} to conclude that there exists
	$\delta \in (0,1)$, $\delta\leq\tanh(\kappa)$, such that
	\begin{equation} \label{eq:phi-nu_well_sep}
	\forall \, \nu \in \posint, \, \forall \, \zeta_1, \zeta_2 \in
	\clos{D(0,\delta)}, \, \|\phi_{\nu}(\zeta_1)-\phi_{\nu}(\zeta_2)\| 
	\geq (\epsilon_0/2) |\zeta_1-\zeta_2|.
	\end{equation}
	Define $\eta \defeq \phi(\delta/2) \in \bdy M$ and $w_{\nu} \defeq
	\phi_{\nu}(\delta/2)$. Then $(w_{\nu})_{\nu \geq 1}$ is a sequence 
	in $M$ converging to $\eta$. By \eqref{eq:phi-nu_well_sep}, 
	it follows immediately that $\phi(0)\neq\phi(\delta/2)$, i.e., $\xi
	\neq \eta$. The sequences $(z_{\nu})_{\nu \geq 1}$ and $(w_{\nu})_{\nu \geq 1}$
	in $M$ converge to $\xi \in \partial M$ and $\eta \in \partial M$, 
	respectively.
	\smallskip
	
	Next, we shall show that $\gamma_{\nu}\defeq\phi_{\nu}\circ\sigma:[0, R] \longrightarrow M$ is a $ (1, \kappa) $-almost-geodesic in $ M $,
	where $\sigma:[0,R]\lraw\unitdisk$ is the geodesic in $\unitdisk$ for the Poincar{\'e} distance joining $0$ to $\delta/2$
	($R$ is, in fact, equal to ${\tanh}^{-1}(\delta/2)$ and $\sigma$
	itself is just $\tanh|_{[0,{\tanh}^{-1}(\delta/2)]}$). 
	By the explicit form of $\sigma$, there exists $M_\delta>1$ such that, for 
	every $s,t \in 
	[0,R]$, $(1/M_\delta)|s-t|\leq|\sigma(s)-\sigma(t)|\leq M_\delta|s-t|$.
	Now note that
    \begin{equation*}
      \forall \, \nu \in \posint, \, \forall \, s,t \in [0,R], \,
      \|\gamma_{\nu}(s)-\gamma_{\nu}(t)\| = \|\phi_{\nu}(\sigma(s))-
      \phi_{\nu}(\sigma(t))\| \leq L |\sigma(s)-\sigma(t)| \leq LM_\delta|s-t|.
    \end{equation*} 
    To write the second inequality above, we use Lemma~\ref{lm:seq_holo_uni_Lip}. 
    Therefore, all the $\gamma_{\nu}$'s are
    Lipschitz (in fact, as we see, $LM_\delta$ works as a Lipschitz constant for 
    all of them) and therefore they are absolutely continuous (in fact, each 
    $\gamma_{\nu}$ is clearly $\smoo^{\infty}$-smooth).
    We compute
    \begin{equation*}
	\forall \, \nu \in \posint, \, \forall \, t \in [0,R], \;
	\dkoba_M(\gamma_{\nu}(t),\gamma'_{\nu}(t)) =
	\dkoba_M\big(\phi_{\nu}(\sigma(t)),\sigma'(t)\phi'_{\nu}(\sigma(t))
	\big) \leq \dkoba_{\unitdisk}(\sigma(t),\sigma'(t))=1. 	 
	\end{equation*}
    \noindent (To write the second inequality above, we use the fact
    that $\phi_{\nu}$ is contractive relative to the Kobayashi metrics; the final 
    equality follows because $\sigma$ is a geodesic for the Poincar{\'e} metric.) 
    We also observe that
	\begin{equation*}
	\forall \, \nu \in \posint, \, \forall \, s,t \in [0,R], \;
	\koba_M(\gamma_{\nu}(s),\gamma_{\nu}(t)) = 
	\koba_M\big(\phi_{\nu}(\sigma(s)),\phi_{\nu}(\sigma(t))\big) 
	\leq \koba_{\unitdisk}(\sigma(s),\sigma(t)) = |s-t|.
	\end{equation*}
	\noindent (To write the second inequality above, we use the fact that 
	$\phi_{\nu}$ is contractive relative to the Kobayashi distances; the final
	equality follows because $\sigma$ is a geodesic for the Poincar{\'e} distance.)
	\smallskip
	
	Finally, 
	since $ |s - t | \leq R \leq \tanh^{-1}(\delta) \leq \tanh^{-1}(\tanh(\kappa)) = \kappa$ for all $ s, t \in [0, R] $, we have
	\[
	|s - t| -\kappa \leq 0 \leq \koba_M\big(\gamma_{\nu}(s), \gamma_{\nu}(t)\big) \leq |s - t| < |s - t| + \kappa 
	\]
	for all $ s, t \in [0, R] $. The above considerations show that
	each $\gamma_{\nu}$ is a $(1,\kappa)$-almost-geodesic.
	By our assumption that $M$ is a $ (1, \kappa) $-visibility submanifold, it follows
	that there exists a compact subset $K$ of $M$ such that, for every
	$\nu \in \posint$, $\mathsf{ran}(\gamma_{\nu}) \cap K \neq 
	\emptyset$. But $\mathsf{ran}(\gamma_{\nu}) \subset 
	\mathsf{ran}(\phi_{\nu})$ and, since $(\phi_{\nu})_{\nu \geq 1}$
	converges uniformly on the compact subsets of $\unitdisk$ to a 
	$\bdy M$-valued map, it follows that for every compact subset $K$ of
	$M$, $\mathsf{ran}(\gamma_{\nu}) \cap K = \emptyset$ for all $\nu$
	sufficiently large. This is a contradiction. So our starting
	assumption must be wrong, and $\kobMetGrM_M(r)\to 0$ as $r \to 0$.
\end{proof}

\begin{remark}
Theorem~\ref{th:visib_taut_KobMetGr_0} shows that, in particular, 
if a bounded, convex domain $\OM$ is a weak visibility domain, one has 
$\kobMetGrM_{\OM}(r)\to 0$ as $r\to 0$.
\end{remark}

We now prove the following analogue of Theorem~4.3 in \cite{Bharali_Maitra}.

\begin{theorem} \label{th:bdy-val_lims_cons}
	Suppose that $M$ is a bounded, connected, embedded complex submanifold of 
	$\C^d$. 
	Suppose that $M$ is a $(1,\kappa)$-visibility submanifold for some $\kappa>0$ 
	and that it is also taut. 
	Suppose that $X$ is a connected complex manifold and that 
	$(\phi_{\nu})_{\nu\geq 1}$ is a sequence of holomorphic maps from $X$
	to $M$ that converges, uniformly on the compact subsets of $X$, to a
	holomorphic map $\psi$ from $X$ to $\bdy M$. Then $\psi$ is constant.
\end{theorem}

\begin{proof}
        We argue exactly as in the proof of Theorem~4.3 in \cite{Bharali_Maitra}.
        The latter result was a direct consequence of the fact that 
        $\kobMetGrM_{\OM}(r)\to 0$ as $r\to 0$ for a taut domain $\OM$ satisfying 
        the visibility property. The corresponding result in the present case 
        is Theorem~\ref{th:visib_taut_KobMetGr_0}. 
\end{proof}

\subsection{Proof of Theorem~\ref{th:WD_submani}}\label{SS:proofwdt}
We are now ready to give a sketch of the proof of Theorem~\ref{th:WD_submani}.
\begin{proof}[Proof of Theorem~\ref{th:WD_submani}]
We argue exactly as in the proof of Theorem~1.8 in 
\cite{Bharali_Maitra}, replacing $\OM$ there by $M$, and consider two subcases as 
in the latter proof. In the first subcase, the results employed in the
latter proof are \cite[Theorem~4.3]{Bharali_Maitra} and 
\cite[Proposition~4.1]{Bharali_Maitra}. The results analogous to these in this 
paper are
Theorem~\ref{th:bdy-val_lims_cons} and Proposition~\ref{prp:tech_lem_vis_man}, 
respectively, which we employ in our argument to settle this subcase.
\smallskip

In the second subcase, the results employed in the proof of 
\cite[Theorem~1.8]{Bharali_Maitra} are \cite[Result~2.1]{Bharali_Maitra},
\cite[Lemma~2.9]{Bharali_Maitra} and
the existence of $(\lambda, \kappa)$-almost-geodesics on bounded domains. The 
results analogous to these in this paper are
Remark~\ref{Rm:analogKoba}, Theorem~\ref{T:exist_1kappa} and 
Result~\ref{Res:basic_but_important}, respectively, 
which we employ in our argument to settle this subcase and complete the proof.
\end{proof}

\noindent{\bf Acknowledgements.}
We thank the anonymous referee for several comments that helped to improve the exposition of the article. 
All three authors were supported by postdoctoral fellowships from the
Harish-Chandra Research Institute (Homi Bhabha National Institute) at the time of
doing this work.
\smallskip

\noindent{\bf Conflict of interest statement.} All the authors declare that there is no conflict of interest.

\end{document}